\newtheorem{theorem}{Theorem}[section]
\newtheorem{lemma}[theorem]{Lemma}
\newtheorem{proposition}[theorem]{Proposition}
\newtheorem{corollary}[theorem]{Corollary}
\theoremstyle{definition}
\newtheorem{definition}[theorem]{Definition}
\theoremstyle{remark}
\newtheorem{remark}[theorem]{Remark}
\newtheorem{notation}[theorem]{Notation}
\numberwithin{equation}{section}
\begin{document}

\setcounter{page}{1}

\title[The weighted Hilbert--Schmidt numerical radius]
{The weighted Hilbert--Schmidt numerical radius}

\author[A.~Zamani]
{Ali Zamani}

\address{School of Mathematics and Computer Sciences, Damghan University, Damghan, P.O.BOX 36715-364, Iran}
\email{zamani.ali85@yahoo.com}

\subjclass[2010]{47A12; 47A30; 47A63; 47B10.}

\keywords{Numerical radius; usual operator norm; weighted numerical radius; Hilbert--Schmidt norm; operator matrix; inequality.}
\begin{abstract}
Let $\mathbb{B}(\mathcal{H})$ be the algebra of all bounded linear operators on a Hilbert space $\mathcal{H}$
and let $N(\cdot)$ be a norm on $\mathbb{B}(\mathcal{H})$.
For every $0\leq \nu \leq 1$, we introduce the $w_{_{(N,\nu)}}(A)$ as an extension of the classical numerical radius by
\begin{align*}
w_{_{(N,\nu)}}(A):= \displaystyle{\sup_{\theta \in \mathbb{R}}}
N\left(\nu e^{i\theta}A + (1-\nu)e^{-i\theta}A^*\right)
\end{align*}
and investigate basic properties of this notion and prove inequalities involving it.
In particular, when $N(\cdot)$ is the Hilbert--Schmidt norm ${\|\!\cdot\!\|}_{2}$,
we present several the weighted Hilbert--Schmidt numerical radius inequalities for operator matrices.
Furthermore, we give a refinement of the triangle inequality for the Hilbert--Schmidt norm as follows:
\begin{align*}
{\|A+B\|}_{2} \leq \sqrt{2w_{_{({\|\!\cdot\!\|}_{2},\nu)}}^2\left(\begin{bmatrix}
0 & A \\
B^* & 0
\end{bmatrix}\right) - (1-2\nu)^2{\|A-B\|}_{2}^2} \leq {\|A\|}_{2} + {\|B\|}_{2}.
\end{align*}
Our results extend some theorems due to F.~Kittaneh et al. (2019).
\end{abstract}
\maketitle
\section{Introduction and preliminaries}
Let $\mathbb{B}(\mathcal{H})$ denote the $C^{\ast}$-algebra of all bounded
linear operators on a complex Hilbert space $\big(\mathcal{H}, \langle \cdot, \cdot\rangle \big)$
and $I$ stand for the identity operator on $\mathcal{H}$.
Every operator $A\in \mathbb{B}(\mathcal{H})$ can be represented as $A = \mathfrak{R}(A) + i\mathfrak{I}(A)$,
the Cartesian decomposition, where $\mathfrak{R}(A)= \frac{A+A^*}{2}$ and $\mathfrak{I}(A)= \frac{A-A^*}{2i}$
are the real and imaginary parts of $A$, respectively.
For $0\leq \nu \leq 1$, we define the weighted real and imaginary parts of $A\in \mathbb{B}(\mathcal{H})$ by
$\mathfrak{R}_{\nu}(A)= \nu A + (1-\nu)A^*$ and $\mathfrak{I}_{\nu}(A)= \nu (-iA) + (1-\nu)(-iA)^*$, respectively.
When $\nu = 1/2$, we clearly have $\mathfrak{R}_{_{1/2}}(A)= \mathfrak{R}(A)$ and $\mathfrak{I}_{_{1/2}}(A)= \mathfrak{I}(A)$.
Let $\|A\|$ and ${\|A\|}_{2}$ denote the usual operator norm and the Hilbert--Schmidt norm of $A$, respectively.
Recall that $A$ belongs to the Hilbert--Schmidt class $\mathcal{C}_2(\mathcal{H})$
if ${\|A\|}_{2} = \sqrt{{\rm tr}(A^*A)}$ is finite, where the symbol ${\rm tr}$ denotes the usual trace.
It is known that for $A, B\in \mathcal{C}_2(\mathcal{H})$,
\begin{align}\label{I.12.1}
\left|{\rm tr}\left(AB\right)\right|\leq {\|A\|}_{2}{\|B\|}_{2}.
\end{align}
The numerical radius of $A\in\mathbb{B}(\mathcal{H})$ is defined by
\begin{align*}
w(A) = \sup \big\{|\langle Ax, x\rangle|:\,\,x\in\mathcal{H},\|x\| = 1\big\}.
\end{align*}
This concept is useful in studying linear operators and has attracted the attention of many authors
in the last few decades (e.g., see \cite{B.H.K, B.B.P, Bo.Co, G.R, Sa, S.B.B.P, Z.M.X.F}, and their references).

An important and useful identity for the numerical radius (see, e.g., \cite{Y}) is as follows:
\begin{align}\label{I.12.3}
w(A)= \displaystyle{\sup_{\theta \in \mathbb{R}}}
\left\|\mathfrak{R}(e^{i\theta}A)\right\|.
\end{align}

It is also well known that $w(\cdot)$ defines a norm on
$\mathbb{B}(\mathcal{H})$ such that for all $A\in\mathbb{B}(\mathcal{H})$,
\begin{align}\label{I.12.2}
\frac{1}{2}\|A\| \leq w(A)\leq \|A\|.
\end{align}
The inequalities in \eqref{I.12.2} are sharp.
The first inequality becomes an equality if $A$ is square-zero (i.e., $A^2 = 0$)
and the second inequality becomes an equality if $A$ is normal in the sense that $A^*A = AA^*$;
see \cite{Dra, Ga.Wu}.

Now, let $N(\cdot)$ be an arbitrary norm on $\mathbb{B}(\mathcal{H})$.
The norm $N(\cdot)$ is self-adjoint if $N(A^*)=N(A)$,
unitarily invariant if $N(VAU)=N(A)$ and
weakly unitarily invariant if $N(U^*AU)=N(A)$
for every $A\in\mathbb{B}(\mathcal{H})$ and unitary $U, V\in\mathbb{B}(\mathcal{H})$.
The norms $\|\!\cdot\!\|$ and ${\|\!\cdot\!\|}_{2}$ are typical examples of self-adjoint and unitarily invariant norms.
Further, the norm $w(\cdot)$ is self-adjoint and weakly unitarily invariant, but, it is not unitarily invariant.

Based on some operator theory studies on Hilbert spaces, several
generalizations for the concept of numerical radius have recently been introduced \cite{A.K.1, Ba.Ka.Ah, S.K.S, Z.W}.
One of these generalizations is the $\mathbb{A}$-numerical radius of an operator $A\in\mathbb{B}(\mathcal{H})$
defined by
\begin{align*}
w_{\mathbb{A}}(A) = \sup \big\{|\langle Ax, \mathbb{A}x\rangle|:\,\,x\in\mathcal{H}, \langle x, \mathbb{A}x\rangle=1\big\},
\end{align*}
see, e.g., \cite{Ba.Ka.Ah, Z.2}. Here, $\mathbb{A}$ is a positive bounded operator on $\mathcal{H}$.

In \cite{A.K.1}, in light of the equality \eqref{I.12.3}, Abu-Omar and Kittaneh introduced the so-called generalized numerical radius,
by introducing the quantity
\begin{align*}
w_{_{N}}(A)= \displaystyle{\sup_{\theta \in \mathbb{R}}}
N\left(\mathfrak{R}(e^{i\theta}A)\right),
\end{align*}
where $N(\cdot)$ is a given norm on $\mathbb{B}(\mathcal{H})$ and $A\in\mathbb{B}(\mathcal{H})$.

Another generalization of the classical numerical radius has been introduced
in \cite{Z.W}, where the authors defined the numerical radius $r_{_{N}}(A)$ induced as follows:
\begin{align*}
r_{_{N}}(A) = \sup\big\{|\xi|:\, \xi \in \mathbb{C}, \, I\perp^N_B (A - \xi I)\big\},
\end{align*}
in which the notation $\perp^N_B$ refers to Birkhoff--James orthogonality.

Very recently, Sheikhhosseini et al. \cite{S.K.S}
introduced the so-called the weighted numerical radius:
If $0\leq \nu \leq 1$, the weighted numerical radius for $A\in \mathbb{B}(\mathcal{H})$, denoted by $w_{_{\nu}}(A)$, is introduced by
\begin{align*}
w_{_{\nu}}(A)= \displaystyle{\sup_{\theta \in \mathbb{R}}}
\left\|\mathfrak{R}_{\nu}(e^{i\theta}A)\right\|.
\end{align*}
Hence, one can see that research involving possible generalizations of the
numerical radius with plausible properties has attracted researchers in this
field.

In Section 2 of this paper and inspired by \cite{A.K.1} and \cite{S.K.S} (see also \cite{AZa}),
for an arbitrary norm $N(\cdot)$ on $\mathbb{B}(\mathcal{H})$ and $0\leq \nu \leq 1$,
we define the $w_{_{(N,\nu)}}(\cdot)$ as a generalization of the weighted numerical radius
and investigate basic properties of this norm and prove inequalities involving it.
In Section 3, when $N(\cdot)$ is the Hilbert--Schmidt norm ${\|\!\cdot\!\|}_{2}$,
we first derive a formula for $w_{_{({\|\!\cdot\!\|}_{2},\nu)}}(A)$ in terms of ${\|A\|}_{2}$ and ${\rm tr}(A^2)$
and we then apply it to obtain several the weighted Hilbert--Schmidt numerical radius inequalities for $2\times2$ operator matrices.
In particular, we prove that
\begin{align*}
\frac{1+|1-2\nu|}{\sqrt{2}}w_{_{({\|\!\cdot\!\|}_{2},\nu)}}(A) \leq w_{_{({\|\!\cdot\!\|}_{2},\nu)}}\left(\begin{bmatrix}
0 & \mathfrak{R}_{\nu}(A) \\
\mathfrak{I}_{\nu}(A) & 0
\end{bmatrix}\right) \leq \sqrt{2}w_{_{({\|\!\cdot\!\|}_{2},\nu)}}(A).
\end{align*}
In Section 4, we give a refinement of the triangle inequality for the Hilbert--Schmidt norm as follows:
\begin{align*}
{\|A+B\|}_{2} \leq \sqrt{2w_{_{({\|\!\cdot\!\|}_{2},\nu)}}^2\left(\begin{bmatrix}
0 & A \\
B^* & 0
\end{bmatrix}\right) - (1-2\nu)^2{\|A-B\|}_{2}^2} \leq {\|A\|}_{2} + {\|B\|}_{2}.
\end{align*}
Our results extend several results in the literature.
\section{A generalization of the weighted numerical radius}
In this section, we introduce our new norm on $\mathbb{B}(\mathcal{H})$,
which generalizes the weighted numerical radius,
and present basic properties of this norm.
\begin{definition}\label{D.1.2}
Let $N(\cdot)$ be a norm on $\mathbb{B}(\mathcal{H})$ and let $0\leq \nu \leq 1$.
The function $w_{_{(N,\nu)}}(\cdot)\colon \mathbb{B}(\mathcal{H}) \rightarrow [0,+\infty)$ is defined as
\begin{align*}
w_{_{(N,\nu)}}(A)= \displaystyle{\sup_{\theta \in \mathbb{R}}}
N\left(\mathfrak{R}_{\nu}(e^{i\theta}A)\right) \qquad (A\in\mathbb{B}(\mathcal{H})).
\end{align*}
\end{definition}
\begin{remark}\label{R.2.2}
Let $N(\cdot)$ be a norm on $\mathbb{B}(\mathcal{H})$ and let $0\leq \nu \leq 1$.
For every $A\in\mathbb{B}(\mathcal{H})$, it is easy to see that
$w_{_{(N,\nu)}}(A)= \displaystyle{\sup_{\theta \in \mathbb{R}}}
N\left(\mathfrak{I}_{\nu}(ie^{i\theta}A)\right)$.
\end{remark}
\begin{remark}\label{R.3.2}
Let $N(\cdot)$ be the usual operator norm $\|\!\cdot\!\|$ and
let $0\leq \nu \leq 1$. For every $A\in\mathbb{B}(\mathcal{H})$, we have
$w_{_{(N,\nu)}}(A) = w_{_{\nu}}(A)$.
Thus $w_{_{(N,\nu)}}(\cdot)$ generalizes the weighted numerical radius $w_{_{\nu}}(\cdot)$,
which has been recently introduced in \cite{S.K.S}.
\end{remark}
\begin{remark}\label{R.4.2}
Let $N(\cdot)$ be a norm on $\mathbb{B}(\mathcal{H})$ and let $A\in\mathbb{B}(\mathcal{H})$.
Obviously, $w_{_{(N,0)}}(A) = N(A^*)$, $w_{_{(N,1)}}(A) = N(A)$ and
$w_{_{(N,1/2)}}(A)= w_{_{N}}(A)$.
Hence $w_{_{(N,\nu)}}(\cdot)$ also generalizes the numerical radius $w_{_{N}}(\cdot)$,
which has been introduced in \cite{A.K.1}.
\end{remark}
\begin{notation}\label{N.5.2}
When $N(\cdot)$ is the Hilbert--Schmidt norm ${\|\!\cdot\!\|}_{2}$, the function $w_{_{(N,\nu)}}(\cdot)$
is denoted by $w_{_{(2,\nu)}}(\cdot)$. That is,
$w_{_{(2,\nu)}}(A)= \displaystyle{\sup_{\theta \in \mathbb{R}}}
{\left\|\mathfrak{R}_{\nu}(e^{i\theta}A)\right\|}_{2}$.
\end{notation}
In the following theorem, we prove that $w_{_{(N,\nu)}}(\cdot)$ is a norm on $\mathbb{B}(\mathcal{H})$.
We use some ideas of \cite[Theorems~1,2]{A.K.1}.
\begin{theorem}\label{T.6.2}
Let $N(\cdot)$ be a norm on $\mathbb{B}(\mathcal{H})$ and let $0\leq \nu\leq 1$.
Then $w_{_{(N,\nu)}}(\cdot)$ is a norm on $\mathbb{B}(\mathcal{H})$ and the following inequalities hold for every $A\in\mathbb{B}(\mathcal{H})$:
\begin{align*}
\max\left\{\nu N(A), (1-\nu)N(A^*)\right\} \leq w_{_{(N,\nu)}}(A) \leq \max\left\{N(A), N(A^*)\right\}.
\end{align*}
\end{theorem}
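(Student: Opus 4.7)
The plan is to verify the three norm axioms in turn and then deduce the two displayed inequalities, using throughout the basic identity $\mathfrak{R}_{\nu}(e^{i\theta}A) = \nu e^{i\theta}A + (1-\nu)e^{-i\theta}A^{*}$, and the fact that $\mathfrak{R}_{\nu}$ is $\mathbb{R}$-linear in its argument (but only conjugate-twisted under complex scalars).

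First I would check positive homogeneity for $\lambda \in \mathbb{C}$: writing $\lambda = re^{i\phi}$ with $r\ge 0$, the change of variable $\theta\mapsto\theta-\phi$ in the supremum, together with $\mathfrak{R}_{\nu}(rB)=r\mathfrak{R}_{\nu}(B)$ for real $r$, gives $w_{_{(N,\nu)}}(\lambda A)=|\lambda|\,w_{_{(N,\nu)}}(A)$. The triangle inequality is immediate from $\mathfrak{R}_{\nu}(e^{i\theta}(A+B))=\mathfrak{R}_{\nu}(e^{i\theta}A)+\mathfrak{R}_{\nu}(e^{i\theta}B)$ followed by the triangle inequality for $N$ and taking suprema. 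For non-degeneracy, if $w_{_{(N,\nu)}}(A)=0$ then $\mathfrak{R}_{\nu}(e^{i\theta}A)=0$ for every $\theta$; specializing to $\theta=0$ and $\theta=\pi/2$ gives $\nu A+(1-\nu)A^{*}=0$ and $\nu A-(1-\nu)A^{*}=0$, so $\nu A=(1-\nu)A^{*}=0$, which forces $A=0$ in each of the three cases $\nu\in(0,1)$, $\nu=0$, $\nu=1$.

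For the upper bound, the triangle inequality for $N$ applied to $\mathfrak{R}_{\nu}(e^{i\theta}A)$ yields
\[
N\bigl(\mathfrak{R}_{\nu}(e^{i\theta}A)\bigr)\le \nu N(A)+(1-\nu)N(A^{*})\le \max\{N(A),N(A^{*})\},
\]
and taking the supremum over $\theta$ finishes that inequality. For the lower bound, I would extract $A$ and $A^{*}$ by combining the $\theta=0$ and $\theta=\pi/2$ evaluations: a direct computation gives the two identities
\[
2\nu A=\mathfrak{R}_{\nu}(A)-i\mathfrak{R}_{\nu}(iA),\qquad 2(1-\nu)A^{*}=\mathfrak{R}_{\nu}(A)+i\mathfrak{R}_{\nu}(iA),
\]
and then the triangle inequality for $N$ produces $\nu N(A)\le \tfrac{1}{2}\bigl[N(\mathfrak{R}_{\nu}(A))+N(\mathfrak{R}_{\nu}(iA))\bigr]\le w_{_{(N,\nu)}}(A)$, and analogously for $(1-\nu)N(A^{*})$, giving the stated maximum bound.

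The main obstacle is really only bookkeeping: one must remember that $\mathfrak{R}_{\nu}$ is only $\mathbb{R}$-linear, so that homogeneity under complex scalars and the key identities recovering $\nu A$ and $(1-\nu)A^{*}$ from values of $\mathfrak{R}_{\nu}(e^{i\theta}A)$ must be produced by the explicit $\theta=0,\pi/2$ calculation rather than by invoking linearity. Apart from this subtlety, no analytic input beyond the norm axioms of $N$ is needed.
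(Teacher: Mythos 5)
Your proposal is correct and follows essentially the same route as the paper: the two-phase identities recovering $2\nu A$ and $2(1-\nu)A^{*}$ from $\mathfrak{R}_{\nu}(A)$ and $\mathfrak{R}_{\nu}(iA)$ drive both the definiteness argument and the lower bound, and the upper bound is the same triangle-inequality estimate. The only cosmetic differences are that you treat the cases $\nu=0,1$ uniformly and obtain the $(1-\nu)N(A^{*})$ bound directly, whereas the paper routes it through the symmetry $w_{_{(N,1-\nu)}}(A^{*})=w_{_{(N,\nu)}}(A)$.
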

\begin{proof}
Let $X\in \mathbb{B}(\mathcal{H})$. Obviously, $w_{_{(N,\nu)}}(X)\geq 0$.
Let us now suppose $w_{_{(N,\nu)}}(X) = 0$.
If $\nu=0$, then by Remark \ref{R.4.2} we get $w_{_{(N,\nu)}}(X) = N(X^*) = 0$. Thus $X^*=0$, or equivalently, $X=0$.
Hence, we may assume that $\nu\neq0$.
Then, by Definition \ref{D.1.2},
$\mathfrak{R}_{\nu}(e^{i\theta}X) = 0$ for every $\theta \in \mathbb{R}$.
Taking $\theta = 0$ and $\theta = \pi/2$, we have $\mathfrak{R}_{\nu}(X) = \mathfrak{R}_{\nu}(iX) = 0$.
Thus $X = \frac{\mathfrak{R}_{\nu}(X) -i \mathfrak{R}_{\nu}(iX)}{2\nu} = 0$.

Let $\alpha \in\mathbb{C}$. There exists $\varphi \in \mathbb{R}$ such that $\alpha= |\alpha|e^{i\varphi}$.
Hence
\begin{align*}
w_{_{(N,\nu)}}(\alpha X) &= \displaystyle{\sup_{\theta \in \mathbb{R}}}
N\left(\mathfrak{R}_{\nu}(e^{i\theta}\alpha X)\right)
\\& = \displaystyle{\sup_{\theta \in \mathbb{R}}}
N\left(\nu e^{i(\theta+\varphi)}|\alpha|X + (1-\nu)e^{-i(\theta+\varphi)}|\alpha|X^*\right)
\\& = \displaystyle{\sup_{\phi \in \mathbb{R}}}
N\left(\nu e^{i\phi}|\alpha|X + (1-\nu)e^{-i\phi}|\alpha|X^*\right)
\\& = |\alpha|\displaystyle{\sup_{\phi \in \mathbb{R}}}
N\left(\mathfrak{R}_{\nu}(e^{i\phi}X)\right) = |\alpha|w_{_{(N,\nu)}}(X).
\end{align*}
Now, let $Y, Z \in\mathbb{B}(\mathcal{H})$. We have
\begin{align*}
w_{_{(N,\nu)}}(Y+Z) &= \displaystyle{\sup_{\theta \in \mathbb{R}}}
N\left(\mathfrak{R}_{\nu}(e^{i\theta}(Y+Z))\right)
\\& = \displaystyle{\sup_{\theta \in \mathbb{R}}}
N\left(\mathfrak{R}_{\nu}(e^{i\theta}Y) + \mathfrak{R}_{\nu}(e^{i\theta}Z)\right)
\\& \leq \displaystyle{\sup_{\theta \in \mathbb{R}}}\left(
N\left(\mathfrak{R}_{\nu}(e^{i\theta}Y)\right) + N\left(\mathfrak{R}_{\nu}(e^{i\theta}Z)\right)\right)
\\& \leq \displaystyle{\sup_{\theta \in \mathbb{R}}}N\left(\mathfrak{R}_{\nu}(e^{i\theta}Y)\right)
+ \displaystyle{\sup_{\theta \in \mathbb{R}}}N\left(\mathfrak{R}_{\nu}(e^{i\theta}Z)\right)
\\& = w_{_{(N,\nu)}}(Y) + w_{_{(N,\nu)}}(Z).
\end{align*}
Thus $w_{_{(N,\nu)}}(\cdot)$ is subadditive and so $w_{_{(N,\nu)}}(\cdot)$ is a norm on $\mathbb{B}(\mathcal{H})$.

On the other hands, for every $A\in\mathbb{B}(\mathcal{H})$, by Definition \ref{D.1.2} we have
\begin{align*}
w_{_{(N,\nu)}}(A) \geq N\left(\nu e^{i\theta}A + (1-\nu)e^{-i\theta}A^*\right) \qquad (\theta \in \mathbb{R}).
\end{align*}
So, by taking $\theta = 0$ and $\theta = -\pi/2$ in the above inequality, we deduce that
\begin{align}\label{I.1.T.6.2}
w_{_{(N,\nu)}}(A) \geq N\left(\mathfrak{R}_{\nu}A\right) \quad \mbox{and} \quad  w_{_{(N,\nu)}}(A) \geq N\left(\mathfrak{I}_{\nu}A\right).
\end{align}
Since $\mathfrak{R}_{\nu}A + i\mathfrak{I}_{\nu}A = 2\nu A$, by \eqref{I.1.T.6.2} and the triangle inequality for the norm $N(\cdot)$, we have
\begin{align*}
w_{_{(N,\nu)}}(A) &\geq \frac{N\left(\mathfrak{R}_{\nu}A\right) + N\left(\mathfrak{I}_{\nu}A\right)}{2}
\\& \geq \frac{N\left(\mathfrak{R}_{\nu}A+ i\mathfrak{I}_{\nu}A\right)}{2}
= \frac{N\left(2\nu A\right)}{2} = \nu N\left(A\right),
\end{align*}
and hence
\begin{align}\label{I.2.T.6.2}
\nu N\left(A\right) \leq w_{_{(N,\nu)}}(A).
\end{align}
Replacing $A$ and $\nu$ in the above inequality by $A^*$ and $1-\nu$, respectively, we have
\begin{align}\label{I.3.T.6.2}
(1-\nu) N\left(A^*\right) \leq w_{_{(N,(1-\nu))}}(A^*).
\end{align}
Since
\begin{align}\label{I.4.T.6.2}
w_{_{(N,(1-\nu))}}(A^*) = \displaystyle{\sup_{\theta \in \mathbb{R}}}
N\left((1-\nu)e^{i\theta}A^* + \nu e^{-i\theta}A\right) = w_{_{(N,\nu)}}(A),
\end{align}
by \eqref{I.3.T.6.2}, it follows that
\begin{align}\label{I.5.T.6.2}
(1-\nu) N\left(A^*\right) \leq w_{_{(N,\nu)}}(A).
\end{align}
Now, \eqref{I.2.T.6.2} and \eqref{I.5.T.6.2} yield that
\begin{align}\label{I.6.T.6.2}
\max\left\{\nu N(A), (1-\nu)N(A^*)\right\} \leq w_{_{(N,\nu)}}(A).
\end{align}
Furthermore, by the triangle inequality for the norm $N(\cdot)$, we have
\begin{align*}
w_{_{(N,\nu)}}(A)&= \displaystyle{\sup_{\theta \in \mathbb{R}}}
N\left(\nu e^{i\theta}A + (1-\nu)e^{-i\theta}A^*\right)
\\& \leq \nu N\left(A\right) + (1-\nu)N\left(A^*\right)
\\& \leq \nu\max\left\{N(A), N(A^*)\right\} + (1-\nu)\max\left\{N(A), N(A^*)\right\}
\\& = \max\left\{N(A), N(A^*)\right\},
\end{align*}
and so
\begin{align}\label{I.7.T.6.2}
w_{_{(N,\nu)}}(A) \leq \max\left\{N(A), N(A^*)\right\}.
\end{align}
From \eqref{I.6.T.6.2} and \eqref{I.7.T.6.2}, we deduce the desired result.
\end{proof}
In the following result we state some properties of the norm $w_{_{(N,\nu)}}(\cdot)$.
\begin{proposition}\label{P.7.2}
Let $N(\cdot)$ be a norm on $\mathbb{B}(\mathcal{H})$ and let $0\leq \nu\leq 1$.
For every $A\in\mathbb{B}(\mathcal{H})$, the following properties hold:
\begin{itemize}
\item[(i)] $w_{_{(N,\nu)}}(A^*) = w_{_{(N,1-\nu)}}(A)$.
\item[(ii)] If $A$ is self-adjoint, then $w_{_{(N,\nu)}}(A) = N(A)$.
\item[(iii)] $w_{_{(N,\nu)}}(A)= \frac{1}{2}\displaystyle{\sup_{\theta,\varphi \in \mathbb{R}}}
N\left(\mathfrak{R}_{\nu}((e^{i\theta}-ie^{i\varphi})A)\right)$.
\end{itemize}
\end{proposition}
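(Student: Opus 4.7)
The plan is to dispatch each of the three parts via a direct manipulation of the supremum defining $w_{_{(N,\nu)}}$. Parts (i) and (ii) are essentially one-line arguments; part (iii) requires a short computation plus a clever choice of parameters to match the upper and lower bounds.

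For part (i), I would change variable $\theta \mapsto -\theta$ in the definition of $w_{_{(N,\nu)}}(A^*)$. Since $\mathfrak{R}_\nu(e^{-i\theta}A^*) = \nu e^{-i\theta}A^* + (1-\nu)e^{i\theta}A = \mathfrak{R}_{1-\nu}(e^{i\theta}A)$, the two suprema coincide. This observation already appears in the derivation of \eqref{I.4.T.6.2} in the proof of Theorem~\ref{T.6.2}, so no further work is needed.

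For part (ii), when $A^* = A$ the inner expression factors as a scalar multiple of $A$:
$\mathfrak{R}_\nu(e^{i\theta}A) = \bigl(\nu e^{i\theta} + (1-\nu) e^{-i\theta}\bigr)A = \bigl(\cos\theta + i(2\nu-1)\sin\theta\bigr)A$,
whence $N\bigl(\mathfrak{R}_\nu(e^{i\theta}A)\bigr) = \sqrt{1 - 4\nu(1-\nu)\sin^2\theta}\,N(A)$. Since $4\nu(1-\nu) \geq 0$ under $0 \leq \nu \leq 1$, the quantity under the square root is maximized when $\sin\theta = 0$, so the supremum over $\theta$ is attained at $\theta = 0$ and equals $N(A)$.

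For part (iii), the key input is the additivity of $\mathfrak{R}_\nu$ together with $\mathfrak{R}_\nu(-X) = -\mathfrak{R}_\nu(X)$. Splitting the scalar factor yields
$\mathfrak{R}_\nu\bigl((e^{i\theta} - ie^{i\varphi})A\bigr) = \mathfrak{R}_\nu(e^{i\theta}A) - \mathfrak{R}_\nu(ie^{i\varphi}A)$.
For the upper bound, I would apply the triangle inequality for $N$; after the substitution $\psi = \varphi + \pi/2$ (so that $ie^{i\varphi} = e^{i\psi}$), both resulting terms are of the form $N\bigl(\mathfrak{R}_\nu(e^{i\cdot}A)\bigr)$ and are hence each bounded by $w_{_{(N,\nu)}}(A)$, giving $\tfrac{1}{2} N(\cdot) \leq w_{_{(N,\nu)}}(A)$. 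For the matching lower bound, I would specialize $\varphi = \theta + \pi/2$ so that $e^{i\theta} - ie^{i\varphi} = 2e^{i\theta}$; the inner expression then collapses to $2\mathfrak{R}_\nu(e^{i\theta}A)$, and together with the prefactor $\tfrac{1}{2}$ and a supremum over $\theta$ this recovers $w_{_{(N,\nu)}}(A)$. The only mildly delicate point is noticing this particular specialization of $\varphi$ that saturates the triangle-inequality upper bound.
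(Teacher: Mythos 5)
Your proposal is correct and follows essentially the same route as the paper: parts (i) and (ii) are identical, and for (iii) you use the same sandwich, with the same specialization $\varphi=\theta+\pi/2$ (so that $e^{i\theta}-ie^{i\varphi}=2e^{i\theta}$) for the lower bound. The only cosmetic difference is in the upper bound of (iii), where you apply the triangle inequality for $N$ to the two summands directly, while the paper bounds $N\bigl(\mathfrak{R}_{\nu}(\mu A)\bigr)\leq w_{_{(N,\nu)}}(\mu A)=|\mu|\,w_{_{(N,\nu)}}(A)$ with $|\mu|=\bigl|e^{i\theta}-ie^{i\varphi}\bigr|\leq 2$; both are equally valid.
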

\begin{proof}
(i) It follows immediately from \eqref{I.4.T.6.2}.

(ii) Since $A$ is self-adjoint, we have $\mathfrak{R}_{\nu}(e^{i\theta}A) = \left(\nu e^{i\theta} +(1-\nu)e^{-i\theta}\right)A$.
Thus
\begin{align*}
w_{_{(N,\nu)}}(A)& = \displaystyle{\sup_{\theta \in \mathbb{R}}}
N\left(\mathfrak{R}_{\nu}(e^{i\theta})A\right)
\\& = \displaystyle{\sup_{\theta \in \mathbb{R}}}
\left|\nu e^{i\theta} +(1-\nu)e^{-i\theta}\right|N(A)
\\& = N(A)\displaystyle{\sup_{\theta \in \mathbb{R}}}
\sqrt{1-4\nu(1-\nu)\sin^2\theta} = N(A).
\end{align*}

(iii)
We have
\begin{align*}
w_{_{(N,\nu)}}(A)&= \frac{1}{2}\displaystyle{\sup_{\theta \in \mathbb{R}}}
N\left(\mathfrak{R}_{\nu}(e^{i\theta}A) + \mathfrak{R}_{\nu}(e^{i\theta}A)\right)
\\& = \frac{1}{2}\displaystyle{\sup_{\theta \in \mathbb{R}}}
N\left(\mathfrak{R}_{\nu}(e^{i\theta}A) + \mathfrak{I}_{\nu}(e^{i(\theta+\pi/2)}A)\right)
\\&\leq \frac{1}{2}\displaystyle{\sup_{\theta,\varphi \in \mathbb{R}}}
N\left(\mathfrak{R}_{\nu}(e^{i\theta}A) + \mathfrak{I}_{\nu}(e^{i\varphi}A)\right)
\\&= \frac{1}{2}\displaystyle{\sup_{\theta,\varphi \in \mathbb{R}}}
N\left(\nu\left(e^{i\theta} -ie^{i\varphi}\right)A + (1-\nu)\left(\left(e^{i\theta} -ie^{i\varphi}\right)A\right)^*\right)
\\&= \frac{1}{2}\displaystyle{\sup_{\theta,\varphi \in \mathbb{R}}}
N\left(\mathfrak{R}_{\nu}\left(\left(e^{i\theta} -ie^{i\varphi}\right)A\right)\right)
\\&\leq \frac{1}{2}\displaystyle{\sup_{\theta,\varphi \in \mathbb{R}}}
w_{_{(N,\nu)}}\left(\left(\left(e^{i\theta} -ie^{i\varphi}\right)A\right)\right)
\\&= \frac{1}{2}\displaystyle{\sup_{\theta,\varphi \in \mathbb{R}}}
\left|e^{i\theta} -ie^{i\varphi}\right|w_{_{(N,\nu)}}(A)
\\&= \frac{w_{_{(N,\nu)}}(A)}{2}\displaystyle{\sup_{\theta,\varphi \in \mathbb{R}}}
\sqrt{2-2\sin(\theta-\varphi)}
= w_{_{(N,\nu)}}(A),
\end{align*}
and so $w_{_{(N,\nu)}}(A)= \frac{1}{2}\displaystyle{\sup_{\theta,\varphi \in \mathbb{R}}}
N\left(\mathfrak{R}_{\nu}((e^{i\theta}-ie^{i\varphi})A)\right)$.
\end{proof}
We now derive some properties of the norm $w_{_{(N,\nu)}}(\cdot)$ when $N(\cdot)$ is a self-adjoint norm on $\mathbb{B}(\mathcal{H})$.
\begin{proposition}\label{P.8.2}
Let $N(\cdot)$ be a self-adjoint norm on $\mathbb{B}(\mathcal{H})$ and let $0\leq \mu, \nu\leq 1$.
Then
\begin{itemize}
\item[(i)] $w_{_{(N,\nu)}}(\cdot)$ is self-adjoint.
\item[(ii)] $w_{_{(N,\nu)}}(\cdot)$ is equivalent to $N(\cdot)$ and the following inequalities hold for
every $A\in\mathbb{B}(\mathcal{H})$:
\begin{align*}
\frac{1+|1-2\nu|}{2}N(A)\leq w_{_{(N,\nu)}}(A) \leq N(A).
\end{align*}
\item[(iii)] For every $A\in\mathbb{B}(\mathcal{H})$, the function $f(\nu) = w_{_{(N,\nu)}}(A)$ is a convex
continuous function on [0, 1] that attains its minimum at $\nu = 1/2$ and its maximum at $\nu = 0, \nu = 1$.
\item[(iv)] For every $A\in\mathbb{B}(\mathcal{H})$, $w_{_{(N,\nu)}}(A)\leq w_{_{(N,\mu)}}(A)$ if and only if $|\nu -1/2|\leq|\mu -1/2|$.
\end{itemize}
\end{proposition}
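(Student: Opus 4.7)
My plan is to handle the four items in turn. Parts (i) and (ii) are immediate from Theorem~\ref{T.6.2} together with the self-adjointness hypothesis $N(A^{*})=N(A)$; parts (iii) and (iv) rely on the observation that, for each fixed $A$, the function $\nu\mapsto w_{_{(N,\nu)}}(A)$ is the supremum of a one-parameter family of convex functions of $\nu$.

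For (i), I would observe that the adjoint of $\nu e^{i\theta}A^{*}+(1-\nu)e^{-i\theta}A$ is $\nu e^{-i\theta}A+(1-\nu)e^{i\theta}A^{*}$; applying $N(X^{*})=N(X)$ and then relabelling $\phi=-\theta$ inside the supremum gives $w_{_{(N,\nu)}}(A^{*})=w_{_{(N,\nu)}}(A)$. For (ii), inserting $N(A)=N(A^{*})$ into the two-sided bound of Theorem~\ref{T.6.2} collapses it to $\max\{\nu,1-\nu\}\,N(A)\le w_{_{(N,\nu)}}(A)\le N(A)$, and $\max\{\nu,1-\nu\}=(1+|1-2\nu|)/2$.

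For (iii), for each $\theta$ the map $\nu\mapsto\nu e^{i\theta}A+(1-\nu)e^{-i\theta}A^{*}$ is affine, so $g_{\theta}(\nu):=N\bigl(\nu e^{i\theta}A+(1-\nu)e^{-i\theta}A^{*}\bigr)$ is convex in $\nu$; hence $f(\nu):=w_{_{(N,\nu)}}(A)=\sup_{\theta}g_{\theta}(\nu)$ is convex on $[0,1]$. Continuity I would obtain from the reverse triangle inequality: $|g_{\theta}(\nu)-g_{\theta}(\nu')|\le 2|\nu-\nu'|\,N(A)$ uniformly in $\theta$, so $f$ is Lipschitz on $[0,1]$. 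Combining part~(i) with Proposition~\ref{P.7.2}(i) yields $f(\nu)=w_{_{(N,\nu)}}(A^{*})=w_{_{(N,1-\nu)}}(A)=f(1-\nu)$, so $f$ is symmetric about $1/2$. Convexity and this symmetry together give $f(1/2)\le\tfrac{1}{2}\bigl(f(\nu)+f(1-\nu)\bigr)=f(\nu)$, locating the minimum at $1/2$; the maximum of a convex function on $[0,1]$ is attained at an endpoint, and symmetry forces $f(0)=f(1)$.

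For (iv), part~(iii) shows that $h(s):=f(1/2+s)$ is an even convex function on $[-1/2,1/2]$, which by a standard three-point convexity argument (writing $s=\lambda(-t)+(1-\lambda)t$ for $0\le s<t\le 1/2$ and invoking $h(-t)=h(t)$) is non-decreasing in $|s|$; this yields the forward implication. The genuinely subtle point will be the reverse implication, which cannot hold pointwise in $A$: for self-adjoint $A$, Proposition~\ref{P.7.2}(ii) makes $f$ constant in $\nu$, so the only viable reading is that the inequality $w_{_{(N,\nu)}}(A)\le w_{_{(N,\mu)}}(A)$ holds for \emph{every} $A\in\mathbb{B}(\mathcal{H})$ iff $|\nu-1/2|\le|\mu-1/2|$. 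I would close this converse by producing a witness $A$ on which $h$ is strictly increasing in $|s|$; for instance, in the Hilbert--Schmidt case any $A$ with ${\|A\|}_{2}^{2}>|\mathrm{tr}(A^{2})|$ works, via the explicit formula to be derived in Section~3.
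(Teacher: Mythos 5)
Your argument is correct and, for parts (i) and (ii), coincides with the paper's (adjoint inside the norm, relabel the phase; then feed $N(A^*)=N(A)$ and $\max\{\nu,1-\nu\}=\tfrac{1+|1-2\nu|}{2}$ into Theorem \ref{T.6.2}). In (iii) you obtain convexity by viewing $f$ as a supremum of the convex functions $g_{\theta}$ (norm composed with an affine map of $\nu$), while the paper verifies the convexity inequality directly from $\mathfrak{R}_{t\nu+(1-t)\mu}(e^{i\theta}A)=t\,\mathfrak{R}_{\nu}(e^{i\theta}A)+(1-t)\,\mathfrak{R}_{\mu}(e^{i\theta}A)$ --- the same underlying fact. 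Your continuity argument is genuinely different and cleaner: the uniform Lipschitz bound $|g_{\theta}(\nu)-g_{\theta}(\nu')|\le 2|\nu-\nu'|\,N(A)$ gives continuity on all of $[0,1]$ in one stroke, whereas the paper uses ``convex implies continuous on $(0,1)$'' and then needs the separate estimate $0\le N(A)-w_{_{(N,\nu)}}(A)\le\tfrac{1-|1-2\nu|}{2}N(A)$ from (ii) to handle the endpoints. For (iv) the paper proves exactly what you prove --- $f$ convex and symmetric about $1/2$, hence decreasing on $[0,1/2]$ and increasing on $[1/2,1]$, hence the forward implication --- and then declares the rest ``trivial''; your observation that the converse fails pointwise (any self-adjoint $A$ makes $f$ constant by Proposition \ref{P.7.2}(ii)) identifies a real defect of the statement rather than of your proof. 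Be aware, though, that even under the universal-quantifier reading your witness strategy is tied to the Hilbert--Schmidt norm and cannot be completed for an arbitrary self-adjoint $N$: taking $N=w$, interchanging the suprema over $\theta$ and over unit vectors shows $w_{_{(w,\nu)}}(A)=w(A)$ for every $\nu$ and every $A$, so no witness exists and the ``only if'' direction is simply false for that $N$. Since the paper supplies no proof of the converse either, your proposal matches, and in its diagnosis exceeds, the paper's actual content.
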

\begin{proof}
(i) Let $A\in\mathbb{B}(\mathcal{H})$. Since the norm $N(\cdot)$ is self-adjoint, we have
\begin{align*}
w_{_{(N,\nu)}}(A^*)&= \displaystyle{\sup_{\theta \in \mathbb{R}}}
N\left(\nu e^{i\theta}A^* + (1-\nu)e^{-i\theta}A\right)
\\&= \displaystyle{\sup_{\theta \in \mathbb{R}}}
N\left(\nu e^{-i\theta}A + (1-\nu)e^{i\theta}A^*\right) = w_{_{(N,\nu)}}(A).
\end{align*}
(ii) Since $N(A^*)=N(A)$ for every $A\in\mathbb{B}(\mathcal{H})$ and $\max\{\nu, 1-\nu\} = \frac{1+|1-2\nu|}{2}$,
the desired inequalities follow from Theorem \ref{T.6.2}.

(iii) Let $A\in\mathbb{B}(\mathcal{H})$ and let $0\leq t\leq1$. The following proof is a modification of the one given
by Sheikhhosseini et al. \cite[Theorem~2.5]{S.K.S}.
We have
\begin{align*}
f\left(t\nu +(1-t)\mu\right) &= w_{_{(N,\left(t\nu +(1-t)\mu\right))}}(A)
\\& = \displaystyle{\sup_{\theta \in \mathbb{R}}}
N\left(\mathfrak{R}_{\left(t\nu +(1-t)\mu\right)}(e^{i\theta}A)\right)
\\& = \displaystyle{\sup_{\theta \in \mathbb{R}}}
N\left(t\mathfrak{R}_{\nu}(e^{i\theta}A) + (1-t)\mathfrak{R}_{\mu}(e^{i\theta}A)\right)
\\& \leq \displaystyle{\sup_{\theta \in \mathbb{R}}}
\Big(tN\left(\mathfrak{R}_{\nu}(e^{i\theta}A)\right) + (1-t)N\left(\mathfrak{R}_{\mu}(e^{i\theta}A)\right)\Big)
\\& \leq tw_{_{(N,\nu)}}(A) + (1-t)w_{_{(N,\mu)}}(A) = tf(\nu) + (1-t)f(\mu),
\end{align*}
and hence $f\left(t\nu +(1-t)\mu\right) \leq tf(\nu) + (1-t)f(\mu)$.
Therefore $f$ is convex on $[0, 1]$ and so $f$ is continuous on $(0, 1)$.
Also, by (ii), we have
\begin{align*}
0 \leq N(A) - w_{_{(N,\nu)}}(A) \leq \frac{1-|1-2\nu|}{2}N(A).
\end{align*}
Thus $f$ is continuous at $\nu=0$ and $\nu=1$. Hence $f$ is continuous on $[0, 1]$.
By (i) and Proposition \ref{P.7.2}(i), we have $f(\nu) = f(1-\nu)$. Thus $f$ is symmetric about $\nu =1/2$.
This shows that $f$ attains its minimum at $\nu =1/2$ and its maximum at
$\nu = 0, \nu = 1$.

(iv) From (iii), the function $f(\nu) = w_{_{(N,\nu)}}(A)$ is convex on [0, 1] and symmetric around $\nu =1/2$, it follows
that it is decreasing on $[0, 1/2]$ and increasing on $[1/2, 1]$. Now, (iv) is trivial.
\end{proof}
Our next result reads as follows.
\begin{proposition}\label{P.9.2}
Let $N(\cdot)$ be a norm on $\mathbb{B}(\mathcal{H})$ and let $0\leq \nu\leq 1$.
\begin{itemize}
\item[(i)] If $N(\cdot)$ is weakly unitarily invariant, then so is $w_{_{(N,\nu)}}(\cdot)$.
\item[(ii)] If $N(\cdot)$ is unitarily invariant, then $w_{_{(N,\nu)}}(A) \leq w_{\nu}(A)N(I)$.
\end{itemize}
\end{proposition}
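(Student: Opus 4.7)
For part (i), the plan is to exploit the fact that $U^*$ and $U$ pass through the weighted real part. Specifically, for any unitary $U$, one computes
\[
\mathfrak{R}_{\nu}\!\left(e^{i\theta}U^*AU\right) = \nu e^{i\theta}U^*AU + (1-\nu)e^{-i\theta}U^*A^*U = U^*\mathfrak{R}_{\nu}(e^{i\theta}A)U.
\]
Taking $N$ of both sides and invoking the weak unitary invariance of $N$ yields $N(\mathfrak{R}_{\nu}(e^{i\theta}U^*AU)) = N(\mathfrak{R}_{\nu}(e^{i\theta}A))$ for every $\theta$, and supremizing over $\theta$ gives $w_{_{(N,\nu)}}(U^*AU) = w_{_{(N,\nu)}}(A)$. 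This part is essentially a one-line calculation with no obstacle.

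For part (ii), the plan hinges on a standard comparison between a unitarily invariant norm $N(\cdot)$ and the operator norm $\|\cdot\|$, namely the pointwise inequality
\[
N(X) \leq \|X\|\, N(I) \qquad (X \in \mathbb{B}(\mathcal{H})).
\]
This can be justified by the fact that a unitarily invariant norm is a symmetric gauge function of the singular values, together with the bound $\sigma_j(X) \leq \|X\|$ for every $j$; equivalently, one writes $X = \|X\| \cdot \tfrac{X}{\|X\|}$ and uses that contractions do not increase any unitarily invariant norm (assuming $N(I)$ is finite, which is the natural setting here).

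Applying this inequality inside the supremum defining $w_{_{(N,\nu)}}$ gives
\[
N\!\left(\mathfrak{R}_{\nu}(e^{i\theta}A)\right) \leq \bigl\|\mathfrak{R}_{\nu}(e^{i\theta}A)\bigr\|\, N(I)
\]
for every $\theta \in \mathbb{R}$. Taking the supremum over $\theta$ and recalling that $w_{_{\nu}}(A) = \sup_{\theta \in \mathbb{R}} \|\mathfrak{R}_{\nu}(e^{i\theta}A)\|$ yields $w_{_{(N,\nu)}}(A) \leq w_{_{\nu}}(A)\, N(I)$, which is the desired inequality. The only subtle point is justifying $N(X) \leq \|X\|\,N(I)$ for a general unitarily invariant norm; beyond that, both parts are direct manipulations of the definition.
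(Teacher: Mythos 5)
Your proof is correct, and part (i) is exactly the paper's argument. For part (ii) you and the paper both reduce the problem to the single pointwise inequality $N(X)\leq \|X\|\,N(I)$ and then take the supremum over $\theta$; the difference lies in how that inequality is justified. The paper proves it from scratch: for each $\theta$ it normalizes $\mathfrak{R}_{\nu}(e^{i\theta}A)$ by $\|\mathfrak{R}_{\nu}(e^{i\theta}A)\|+\epsilon$ to get a strict contraction, invokes Gardner's theorem to write that contraction as a mean of finitely many unitaries $\frac{U_1+\cdots+U_n}{n}$, applies unitary invariance and the triangle inequality to get $N$ of the contraction bounded by $N(I)$, and finally lets $\epsilon\to 0^{+}$. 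You instead cite the standard facts that a unitarily invariant norm is a symmetric gauge function of singular values, or that contractions do not increase unitarily invariant norms. Your route is shorter if one grants those lemmas, but two cautions are in order: the symmetric gauge function characterization is a theorem about matrices (or about symmetrically normed ideals of compact operators), and here $N$ is an arbitrary unitarily invariant norm on all of $\mathbb{B}(\mathcal{H})$, where that machinery does not directly apply; and the fact that $N(CXD)\leq\|C\|\,\|D\|\,N(X)$ is itself usually proved by expressing contractions as (limits of) convex combinations of unitaries, i.e., by essentially the Russo--Dye/Gardner argument the paper carries out explicitly. So the paper's proof is the self-contained version of the lemma you are quoting; your argument is sound provided you accept that lemma in this generality, but as written it leans on results whose standard proofs either live in a narrower setting or reduce to the paper's own argument.
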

\begin{proof}
(i) Let $N(\cdot)$ be weakly unitarily invariant and let $A\in\mathbb{B}(\mathcal{H})$.
Let $U\in\mathbb{B}(\mathcal{H})$ be unitary. We have
\begin{align*}
w_{_{(N,\nu)}}(U^*AU) &= \displaystyle{\sup_{\theta \in \mathbb{R}}}
N\left(\mathfrak{R}_{\nu}(e^{i\theta}U^*AU)\right)
\\& = \displaystyle{\sup_{\theta \in \mathbb{R}}}
N\left(U^*\mathfrak{R}_{\nu}(e^{i\theta}A)U\right)
\\& = \displaystyle{\sup_{\theta \in \mathbb{R}}}
N\left(\mathfrak{R}_{\nu}(e^{i\theta}A)\right) = w_{_{(N,\nu)}}(A).
\end{align*}
Thus $w_{_{(N,\nu)}}(\cdot)$ is weakly unitarily invariant.

(ii) Let $N(\cdot)$ be unitarily invariant and let $A\in\mathbb{B}(\mathcal{H})$. Let $\epsilon>0$.
For every $\theta \in \mathbb{R}$, since
$\left\|\frac{\mathfrak{R}_{\nu}(e^{i\theta}A)}{\left\|\mathfrak{R}_{\nu}(e^{i\theta}A)\right\| + \epsilon}\right\| < 1$,
by Gardner's theorem (see, e.g., \cite{Black}), there exist unitaries $U_1, U_2, \cdots U_n\in\mathbb{B}(\mathcal{H})$ such that
\begin{align*}
\frac{\mathfrak{R}_{\nu}(e^{i\theta}A)}{\left\|\mathfrak{R}_{\nu}(e^{i\theta}A)\right\| + \epsilon}
= \frac{U_1+ U_2+ \cdots +U_n}{n}.
\end{align*}
Since the norm $N(\cdot)$ is unitarily invariant, we have $N\left(U_k\right) = N\left(I\right)$ for all $1\leq k\leq n$,
and therefore,
\begin{align*}
N\left(\frac{\mathfrak{R}_{\nu}(e^{i\theta}A)}{\left\|\mathfrak{R}_{\nu}(e^{i\theta}A)\right\| + \epsilon}\right)
&= \frac{N\left(U_1+ U_2+ \cdots +U_n\right)}{n}
\\& \leq \frac{N\left(U_1\right)+ N\left(U_2\right)+ \cdots +N\left(U_n\right)}{n} = N\left(I\right).
\end{align*}
From this it follows that
\begin{align*}
N\left(\mathfrak{R}_{\nu}(e^{i\theta}A)\right)
\leq \left(\left\|\mathfrak{R}_{\nu}(e^{i\theta}A)\right\| + \epsilon\right)N\left(I\right).
\end{align*}
Taking the supremum over $\theta \in \mathbb{R}$ in the above inequality, we get
\begin{align*}
w_{_{(N,\nu)}}(A) \leq \left(w_{\nu}(A) + \epsilon\right)N\left(I\right).
\end{align*}
Now, letting $\epsilon \rightarrow 0^{+}$, we conclude $w_{_{(N,\nu)}}(A) \leq w_{\nu}(A)N(I)$.
\end{proof}
\begin{remark}\label{R.10.2}
Since the Hilbert--Schmidt norm ${\|\cdot\|}_{2}$ is unitarily invariant and
${\|I\|}_{2} = 1$, by Proposition \ref{P.9.2}(ii) for every $A\in \mathcal{C}_2(\mathcal{H})$, we have $w_{_{(2,\nu)}}(A) \leq w_{\nu}(A)$.
\end{remark}
\section{The weighted Hilbert--Schmidt numerical radius inequalities for operator matrices}
In this section, we first derive a formula for $w_{_{(2,\nu)}}(A)$ in terms of ${\|A\|}_{2}$ and ${\rm tr}(A^2)$
and we then apply it to obtain several the weighted Hilbert--Schmidt numerical radius inequalities for $2\times2$ operator matrices.
These inequalities generalize known Hilbert--Schmidt numerical radius inequalities (see, e.g., \cite{Bak.S, Ba.K, H.K.S, M.S, N.S.M, Sh, X.Y.Z}).
\begin{theorem}\label{T.1.3}
Let $A\in \mathcal{C}_2(\mathcal{H})$ and let $0\leq \nu \leq 1$. Then
\begin{align*}
w_{_{(2,\nu)}}^2(A) = (2\nu^2-2\nu+1){\|A\|}_{2}^2 + 2\nu(1-\nu)\left|{\rm tr}(A^2)\right|.
\end{align*}
\end{theorem}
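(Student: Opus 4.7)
The plan is to compute the Hilbert--Schmidt norm inside the supremum explicitly by squaring and using the trace formula, then optimize over $\theta$ analytically. The computation should be quite direct and yield the identity on the nose, so there is no genuine obstacle; the care will be in bookkeeping the complex phases.

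First I would write, by definition and the identity $\|X\|_2^2={\rm tr}(X^*X)$ for $X\in\mathcal{C}_2(\mathcal{H})$,
\begin{align*}
w_{_{(2,\nu)}}^2(A)=\sup_{\theta\in\mathbb{R}}{\rm tr}\Bigl(\bigl(\nu e^{i\theta}A+(1-\nu)e^{-i\theta}A^*\bigr)^{\!*}\bigl(\nu e^{i\theta}A+(1-\nu)e^{-i\theta}A^*\bigr)\Bigr).
\end{align*}
Expanding the product gives four terms: $\nu^2\,A^*A$, $(1-\nu)^2\,AA^*$, a cross term $\nu(1-\nu)e^{2i\theta}A^2$, and its adjoint $\nu(1-\nu)e^{-2i\theta}(A^*)^2$.

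Next I would apply the trace. Since ${\rm tr}(A^*A)={\rm tr}(AA^*)=\|A\|_2^2$, the first two terms combine into $(\nu^2+(1-\nu)^2)\|A\|_2^2=(2\nu^2-2\nu+1)\|A\|_2^2$. For the cross terms, using ${\rm tr}((A^*)^2)=\overline{{\rm tr}(A^2)}$, they sum to
\begin{align*}
\nu(1-\nu)\bigl(e^{2i\theta}{\rm tr}(A^2)+e^{-2i\theta}\overline{{\rm tr}(A^2)}\bigr)=2\nu(1-\nu)\,{\rm Re}\!\left(e^{2i\theta}{\rm tr}(A^2)\right).
\end{align*}
Thus for every $\theta$,
\begin{align*}
\bigl\|\nu e^{i\theta}A+(1-\nu)e^{-i\theta}A^*\bigr\|_2^2=(2\nu^2-2\nu+1)\|A\|_2^2+2\nu(1-\nu)\,{\rm Re}\!\left(e^{2i\theta}{\rm tr}(A^2)\right).
\end{align*}

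Finally, I would take the supremum over $\theta$. Since ${\rm Re}(e^{2i\theta}z)\leq|z|$ with equality attained by choosing $2\theta=-\arg z$ when $z\neq 0$ (and the inequality is trivially an equality when $z=0$), the supremum of ${\rm Re}(e^{2i\theta}{\rm tr}(A^2))$ equals $|{\rm tr}(A^2)|$. Combining yields the claimed formula. The only subtle point worth flagging is the last step, where one should note that the supremum over $\theta$ of the quantity $2\nu(1-\nu)\,{\rm Re}(e^{2i\theta}{\rm tr}(A^2))$ is attained, so there is no issue with exchanging supremum and square root when passing back to $w_{_{(2,\nu)}}(A)$.
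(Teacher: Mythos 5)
Your proposal is correct and follows essentially the same route as the paper's proof: expand $\bigl\|\mathfrak{R}_{\nu}(e^{i\theta}A)\bigr\|_2^2={\rm tr}\bigl(\mathfrak{R}_{\nu}(e^{i\theta}A)^*\mathfrak{R}_{\nu}(e^{i\theta}A)\bigr)$ into the four terms, collect them as $(2\nu^2-2\nu+1)\|A\|_2^2+2\nu(1-\nu)\,\mathfrak{R}\!\left(e^{2i\theta}{\rm tr}(A^2)\right)$, and take the supremum over $\theta$. No gaps; your remark that the supremum is attained is a harmless extra precaution.
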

\begin{proof}
For every $\theta \in \mathbb{R}$, we have
\begingroup\makeatletter\def\f@size{10}\check@mathfonts
\begin{align*}
{\left\|\mathfrak{R}_{\nu}(ie^{i\theta}A)\right\|}_{2}^2
&= {\rm tr}\left(\left(\mathfrak{R}_{\nu}(ie^{i\theta}A)\right)^*\mathfrak{R}_{\nu}(ie^{i\theta}A)\right)
\\& = {\rm tr}\Big(\left(\nu e^{-i\theta}A^* + (1-\nu)e^{i\theta}A\right)\left(\nu e^{i\theta}A + (1-\nu)e^{-i\theta}A^*\right)\Big)
\\& = {\rm tr}\Big(\nu^2 A^*A + \nu(1-\nu)e^{2i\theta}A^2 + e^{-2i\theta}\nu(1-\nu)(A^*)^2 + (1-\nu)^2AA^*\Big)
\\& = \nu^2{\rm tr}\left(A^*A\right) + \nu(1-\nu)\left(e^{2i\theta}{\rm tr}\left(A^2\right) + e^{-2i\theta}{\rm tr}\left((A^2)^*\right)\right)
+ (1-\nu)^2{\rm tr}\left(AA^*\right)
\\& = \nu^2{\|A\|}_{2}^2 +2\nu(1-\nu)\mathfrak{R}\left(e^{2i\theta}{\rm tr}\left(A^2\right)\right) + (1-\nu)^2{\|A\|}_{2}^2
\\& = (2\nu^2-2\nu+1){\|A\|}_{2}^2 +2\nu(1-\nu)\mathfrak{R}\left(e^{2i\theta}{\rm tr}\left(A^2\right)\right).
\end{align*}
\endgroup
Therefore,
\begin{align*}
w_{_{(2,\nu)}}^2(A) &= \displaystyle{\sup_{\theta \in \mathbb{R}}}
{\left\|\mathfrak{R}_{\nu}(e^{i\theta}A)\right\|}_{2}^2
\\& = \displaystyle{\sup_{\theta \in \mathbb{R}}}
\Big((2\nu^2-2\nu+1){\|A\|}_{2}^2 +2\nu(1-\nu)\mathfrak{R}\left(e^{2i\theta}{\rm tr}\left(A^2\right)\right)\Big)
\\& = (2\nu^2-2\nu+1){\|A\|}_{2}^2 + 2\nu(1-\nu)\left|{\rm tr}(A^2)\right|.
\end{align*}
\end{proof}
\begin{remark}
Theorem \ref{T.1.3} is a generalization of a result due to Abu-Omar and Kittaneh \cite{A.K.1}.
In fact if $\nu = 1/2$, then
\begin{align*}
w_{_{2}}^2(A) = \frac{1}{2}{\|A\|}_{2}^2 + \frac{1}{2}\left|{\rm tr}(A^2)\right|
\end{align*}
which has been proven in \cite[Theorem~7]{A.K.1}.
\end{remark}
\begin{theorem}\label{T.2.3}
Let $A\in \mathcal{C}_2(\mathcal{H})$ and let $0\leq \nu \leq 1$. Then
\begin{align*}
\sqrt{2\nu^2-2\nu+1}{\|A\|}_{2} \leq w_{_{(2,\nu)}}(A) \leq {\|A\|}_{2}.
\end{align*}
\end{theorem}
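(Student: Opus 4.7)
The plan is to deduce both inequalities directly from the exact formula
\[
w_{_{(2,\nu)}}^{2}(A) \;=\; (2\nu^{2}-2\nu+1)\,{\|A\|}_{2}^{2} \;+\; 2\nu(1-\nu)\,\bigl|{\rm tr}(A^{2})\bigr|
\]
established in Theorem \ref{T.1.3}. Both bounds will reduce to squared versions and then be extracted by taking square roots (everything in sight is nonnegative, so this is harmless).

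For the lower inequality, the idea is trivial once the formula is in hand: since $\nu\in[0,1]$, the factor $2\nu(1-\nu)$ is nonnegative, and $|{\rm tr}(A^{2})|\geq 0$, so the second summand in the formula can simply be dropped. This yields
\[
w_{_{(2,\nu)}}^{2}(A) \;\geq\; (2\nu^{2}-2\nu+1)\,{\|A\|}_{2}^{2},
\]
and taking square roots gives the left-hand estimate.

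For the upper inequality, the key step is to control $|{\rm tr}(A^{2})|$ by ${\|A\|}_{2}^{2}$. Here I would invoke the Hilbert--Schmidt trace estimate \eqref{I.12.1} with both factors equal to $A$:
\[
\bigl|{\rm tr}(A^{2})\bigr| \;=\; \bigl|{\rm tr}(A\cdot A)\bigr| \;\leq\; {\|A\|}_{2}{\|A\|}_{2} \;=\; {\|A\|}_{2}^{2}.
\]
Plugging this into the formula and using $(2\nu^{2}-2\nu+1)+2\nu(1-\nu)=1$, we obtain
\[
w_{_{(2,\nu)}}^{2}(A) \;\leq\; (2\nu^{2}-2\nu+1){\|A\|}_{2}^{2} + 2\nu(1-\nu){\|A\|}_{2}^{2} \;=\; {\|A\|}_{2}^{2},
\]
and a square root finishes the argument.

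There is really no obstacle here: once Theorem \ref{T.1.3} is available, the only nonobvious input is recognizing that the coefficients $(2\nu^{2}-2\nu+1)$ and $2\nu(1-\nu)$ sum to $1$, which is exactly what makes \eqref{I.12.1} tight enough to produce the clean upper bound ${\|A\|}_{2}$ independently of $\nu$. It is worth noting as a remark that the upper bound is attained precisely when $|{\rm tr}(A^{2})|={\|A\|}_{2}^{2}$ (equality case of \eqref{I.12.1}), while the lower bound is attained whenever ${\rm tr}(A^{2})=0$, for instance for square-zero operators.
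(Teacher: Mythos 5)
Your proof is correct and follows exactly the paper's route: the paper likewise deduces both bounds from Theorem \ref{T.1.3} together with the estimate $0\leq \left|{\rm tr}(A^2)\right| \leq {\|A\|}_{2}^2$, which is the inequality \eqref{I.12.1} applied with both factors equal to $A$. You have merely written out the details (including the observation that the two coefficients sum to $1$) that the paper leaves implicit.
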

\begin{proof}
The proof follows from Theorem \ref{T.1.3} and the fact that $0\leq \left|{\rm tr}(A^2)\right| \leq {\|A\|}_{2}^2$.
\end{proof}
\begin{remark}\label{R.3.3}
Let $0\leq \nu \leq 1$. It is easy to see that $\frac{1+|1-2\nu|}{2} \leq \sqrt{2\nu^2-2\nu+1}$, thus
the first inequality in Theorem \ref{T.2.3} refines the first inequality in Proposition \ref{P.8.2}(ii).
\end{remark}
\begin{corollary}\label{C.4.3}
Let $A\in \mathcal{C}_2(\mathcal{H})$ and let $0< \nu < 1$.
Then the following conditions are equivalent:
\begin{itemize}
\item[(i)] $w_{_{(2,\nu)}}(A) = \sqrt{2\nu^2-2\nu+1}{\|A\|}_{2}$.
\item[(ii)] ${\rm tr}(A^2) =0$.
\end{itemize}
\end{corollary}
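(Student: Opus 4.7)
The plan is to read the equivalence directly off the exact formula established in Theorem \ref{T.1.3}. By that theorem,
\[
w_{_{(2,\nu)}}^{2}(A) \;=\; (2\nu^{2}-2\nu+1)\,{\|A\|}_{2}^{2} \;+\; 2\nu(1-\nu)\,\bigl|{\rm tr}(A^{2})\bigr|.
\]
So the identity $w_{_{(2,\nu)}}(A) = \sqrt{2\nu^{2}-2\nu+1}\,{\|A\|}_{2}$ is, after squaring both sides, equivalent to
\[
2\nu(1-\nu)\,\bigl|{\rm tr}(A^{2})\bigr| \;=\; 0.
\]

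First I would note that squaring is reversible here because both $w_{_{(2,\nu)}}(A)$ and $\sqrt{2\nu^{2}-2\nu+1}\,{\|A\|}_{2}$ are nonnegative real numbers. Next I would use the hypothesis $0<\nu<1$, which forces the coefficient $2\nu(1-\nu)$ to be strictly positive; hence the displayed equation reduces to $|{\rm tr}(A^{2})| = 0$, i.e.\ ${\rm tr}(A^{2}) = 0$. Conversely, if ${\rm tr}(A^{2})=0$, substituting into the formula of Theorem \ref{T.1.3} yields $w_{_{(2,\nu)}}^{2}(A) = (2\nu^{2}-2\nu+1){\|A\|}_{2}^{2}$, and taking square roots gives (i).

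There is no real obstacle here: the proof is a one-line consequence of Theorem \ref{T.1.3}, whose content has already absorbed the analytical work (the supremum over $\theta$ and the trace computation). The only thing worth flagging explicitly is why $0<\nu<1$ is needed, namely to ensure that $2\nu(1-\nu)\ne 0$ so that the trace term cannot vanish for trivial reasons; at the endpoints $\nu=0$ or $\nu=1$ the coefficient $2\nu(1-\nu)$ degenerates to $0$ and the equivalence breaks down (both sides equal ${\|A\|}_{2}$ regardless of ${\rm tr}(A^{2})$).
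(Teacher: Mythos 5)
Your proof is correct and follows exactly the same route as the paper, which simply notes that the equivalence is immediate from the formula in Theorem \ref{T.1.3}; your extra remarks on the reversibility of squaring and the role of $0<\nu<1$ just make the ``immediate'' explicit.
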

\begin{proof}
The proof follows immediately from Theorem \ref{T.1.3}.
\end{proof}
\begin{corollary}\label{C.5.3}
Let $A\in \mathcal{C}_2(\mathcal{H})$ and let $0< \nu < 1$.
Then the following conditions are equivalent:
\begin{itemize}
\item[(i)] $w_{_{(2,\nu)}}(A) = {\|A\|}_{2}$.
\item[(ii)] $A$ is normal and the squares of its
nonzero eigenvalues have the same argument.
\end{itemize}
\end{corollary}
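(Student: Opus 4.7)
The plan is to reduce condition (i) to an equality case of the Cauchy--Schwarz inequality via Theorem \ref{T.1.3}, and then unpack that equality case using the spectral theorem for normal compact operators.

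First, by Theorem \ref{T.1.3},
\[
w_{_{(2,\nu)}}^2(A) - {\|A\|}_2^2 = 2\nu(1-\nu)\bigl(\bigl|{\rm tr}(A^2)\bigr| - {\|A\|}_2^2\bigr),
\]
using the identity $(2\nu^2-2\nu+1)+2\nu(1-\nu)=1$. Since $0<\nu<1$ forces $2\nu(1-\nu)>0$, condition (i) is equivalent to $\bigl|{\rm tr}(A^2)\bigr|={\|A\|}_2^2$. The Hilbert--Schmidt inner product $\langle X,Y\rangle_2={\rm tr}(Y^*X)$ gives ${\rm tr}(A^2)=\langle A,A^*\rangle_2$, so inequality \eqref{I.12.1} becomes Cauchy--Schwarz: $\bigl|{\rm tr}(A^2)\bigr|\le{\|A\|}_2{\|A^*\|}_2={\|A\|}_2^2$, with equality (for $A\neq 0$) iff $A$ and $A^*$ are linearly dependent, i.e.\ $A^*=\lambda A$ for some $\lambda\in\mathbb{C}$. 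Taking adjoints shows $|\lambda|=1$, so writing $\lambda=e^{-2i\alpha}$ and setting $H:=e^{i\alpha}A$ one verifies $H^*=H$; hence (i) is equivalent to the existence of $\alpha\in\mathbb{R}$ and a self-adjoint $H\in\mathcal{C}_2(\mathcal{H})$ with $A=e^{-i\alpha}H$.

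Next I would translate this structural description into the eigenvalue condition. If $A=e^{-i\alpha}H$ with $H$ self-adjoint, then $A$ is obviously normal, and since $H$ has real eigenvalues $\mu_k$, the nonzero eigenvalues of $A$ are $e^{-i\alpha}\mu_k$, whose squares $e^{-2i\alpha}\mu_k^2$ all share the argument $-2\alpha$ (mod $2\pi$). Conversely, if $A$ is normal and Hilbert--Schmidt, then $A$ is compact normal, so by the spectral theorem $A=\sum_k\lambda_k\langle\cdot,e_k\rangle e_k$ in an orthonormal eigenbasis. If the squares of the nonzero eigenvalues share a common argument $-2\alpha$, then $e^{i\alpha}\lambda_k\in\mathbb{R}$ for every nonzero $\lambda_k$, and $e^{i\alpha}A=\sum_k(e^{i\alpha}\lambda_k)\langle\cdot,e_k\rangle e_k$ is self-adjoint, giving the required decomposition.

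There is no real obstacle here: the work splits cleanly into an algebraic step (Cauchy--Schwarz equality) and a spectral step (characterising $A=e^{-i\alpha}H$ in eigenvalue terms). The only care needed is to dispatch the trivial case $A=0$ at the outset and to observe that the Hilbert--Schmidt hypothesis is what makes the spectral theorem for normal operators available in its diagonalisable form, so that the eigenvalue condition is actually meaningful.
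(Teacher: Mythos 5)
Your argument is correct in substance and follows the same route the paper intends: the paper's own proof simply invokes Theorem \ref{T.1.3} and refers to the proof of \cite[Corollary~2(ii)]{A.K.1}, which is exactly the reduction to $\left|{\rm tr}(A^2)\right|={\|A\|}_{2}^2$ followed by the Cauchy--Schwarz equality case and the spectral theorem that you carry out in detail. One small slip: with $A^*=\lambda A$ and $\lambda=e^{-2i\alpha}$, the operator $H:=e^{i\alpha}A$ satisfies $H^*=e^{-i\alpha}A^*=e^{-3i\alpha}A$, which is not $H$ in general; you should instead take $H:=e^{-i\alpha}A$ (equivalently $A=e^{i\alpha}H$), or parametrize $\lambda=e^{2i\alpha}$. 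This is purely a sign bookkeeping issue and does not affect the structure of the proof --- the conclusion that $A$ is a unimodular multiple of a self-adjoint Hilbert--Schmidt operator, and its translation into the eigenvalue condition, go through verbatim after the correction.
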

\begin{proof}
By using Theorem \ref{T.1.3} and applying a similar proof as in of \cite[Corollary~2(ii)]{A.K.1},
the required result is obtained.
\end{proof}
The following lemma is known in the literature (see, e.g., \cite{Bhatia}).
\begin{lemma}\label{L.6.3}
Let $X, Y, Z, W \in \mathcal{C}_2(\mathcal{H})$. Then
\begin{align*}
{\left\|\begin{bmatrix}
X & Y \\
Z & W
\end{bmatrix}\right\|}_{2}^2 = {\|X\|}_{2}^2 + {\|Y\|}_{2}^2 + {\|Z\|}_{2}^2 + {\|W\|}_{2}^2.
\end{align*}
\end{lemma}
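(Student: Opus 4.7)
The plan is to compute the Hilbert--Schmidt norm directly from its definition via the trace, namely ${\|T\|}_{2}^{2}={\rm tr}(T^{*}T)$, applied to the block operator $T=\begin{bmatrix} X & Y \\ Z & W \end{bmatrix}$ acting on $\mathcal{H}\oplus\mathcal{H}$. Since this is a standard structural identity for the Hilbert--Schmidt norm on a direct sum, the work is essentially bookkeeping with block multiplication.

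First, I would write down the adjoint
\[
T^{*}=\begin{bmatrix} X^{*} & Z^{*} \\ Y^{*} & W^{*} \end{bmatrix},
\]
and then carry out the block product to obtain
\[
T^{*}T=\begin{bmatrix} X^{*}X+Z^{*}Z & X^{*}Y+Z^{*}W \\ Y^{*}X+W^{*}Z & Y^{*}Y+W^{*}W \end{bmatrix}.
\]
Next, I would invoke the fact that the trace of a $2\times 2$ block operator on $\mathcal{H}\oplus\mathcal{H}$ equals the sum of the traces of its diagonal blocks (this follows directly by choosing an orthonormal basis of $\mathcal{H}\oplus\mathcal{H}$ of the form $\{e_{k}\oplus 0\}\cup\{0\oplus e_{k}\}$). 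Applying this to $T^{*}T$ yields
\[
{\rm tr}(T^{*}T)={\rm tr}(X^{*}X+Z^{*}Z)+{\rm tr}(Y^{*}Y+W^{*}W).
\]

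Finally, using linearity of the trace together with ${\rm tr}(A^{*}A)={\|A\|}_{2}^{2}$ for each of $A=X,Y,Z,W$, I conclude
\[
{\left\|\begin{bmatrix} X & Y \\ Z & W \end{bmatrix}\right\|}_{2}^{2}={\|X\|}_{2}^{2}+{\|Y\|}_{2}^{2}+{\|Z\|}_{2}^{2}+{\|W\|}_{2}^{2}.
\]
There is no genuine obstacle here; the only point requiring a brief justification is the trace identity for block operators, and even that reduces at once to the definition once one uses a basis of $\mathcal{H}\oplus\mathcal{H}$ adapted to the direct sum decomposition. Hence the proof is short and self-contained, which is consistent with the lemma being cited as standard (e.g.\ \cite{Bhatia}).
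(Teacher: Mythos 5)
Your proof is correct, and the block computation of $T^{*}T$ together with the trace identity for diagonal blocks is exactly the standard argument. The paper itself gives no proof of this lemma---it only remarks that the result is known and cites \cite{Bhatia}---so there is nothing to compare against; your self-contained verification is the argument one would find in that reference.
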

In the next theorem we compute the weighted Hilbert--Schmidt numerical
radius for certain $2\times2$ operator matrices defined on $\mathcal{H}\oplus \mathcal{H}$.
\begin{theorem}\label{T.7.3}
Let $A, B, C, D \in \mathcal{C}_2(\mathcal{H})$ and let $0\leq \nu \leq 1$ and $\theta \in \mathbb{R}$.
\begin{itemize}
\item[(i)] $w_{_{(2,\nu)}}\left(\begin{bmatrix}
0 & A \\
B & 0
\end{bmatrix}\right) = w_{_{(2,\nu)}}\left(\begin{bmatrix}
0 & B \\
A & 0
\end{bmatrix}\right)$.
\item[(ii)] $w_{_{(2,\nu)}}\left(\begin{bmatrix}
0 & A \\
B & 0
\end{bmatrix}\right) = w_{_{(2,\nu)}}\left(\begin{bmatrix}
0 & A \\
e^{i\theta}B & 0
\end{bmatrix}\right)$.
\item[(iii)] $w_{_{(2,\nu)}}\left(\begin{bmatrix}
A & A \\
-A & -A
\end{bmatrix}\right) = 2\sqrt{2\nu^2-2\nu+1}{\|A\|}_{2}$.
\item[(iv)] $w_{_{(2,\nu)}}\left(\begin{bmatrix}
0 & A \\
A & 0
\end{bmatrix}\right) = \sqrt{2}w_{_{(2,\nu)}}(A)$.
\item[(v)] $w_{_{(2,\nu)}}^2\left(\begin{bmatrix}
A & B \\
0 & 0
\end{bmatrix}\right) = w_{_{(2,\nu)}}^2(A) + (2\nu^2-2\nu+1){\|B\|}_{2}^2$.
\item[(vi)]
If ${\rm tr}(BC) = 0$, then
$w_{_{(2,\nu)}}^2\left(\begin{bmatrix}
A & B \\
C & iA
\end{bmatrix}\right) = (2\nu^2-2\nu+1)\left(2{\|A\|}_{2}^2 + {\|B\|}_{2}^2+{\|C\|}_{2}^2\right)$.
\item[(vii)] If $A, B$ are positive, then
$w_{_{(2,\nu)}}^2\left(\begin{bmatrix}
0 & A \\
B & 0
\end{bmatrix}\right) = \frac{{\|A+B\|}_{2}^2 + (1-2\nu)^2{\|A-B\|}_{2}^2}{2}$.
\item[(viii)] If $A, B$ are self-adjoint, then
$w_{_{(2,\nu)}}^2\left(\begin{bmatrix}
A & 0 \\
0 & B
\end{bmatrix}\right) = w_{_{(2,\nu)}}^2(A) + w_{_{(2,\nu)}}^2(B)$.
\item[(ix)] If $A, B$ are self-adjoint, then
\begin{align*}
w_{_{(2,\nu)}}^2\left(\begin{bmatrix}
A & B \\
B & A
\end{bmatrix}\right) = w_{_{(2,\nu)}}^2(A+B) + w_{_{(2,\nu)}}^2(A-B).
\end{align*}
\end{itemize}
\end{theorem}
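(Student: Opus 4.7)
The plan is to reduce every one of the nine computations to a single master template built from Theorem \ref{T.1.3}. For a $2\times 2$ block operator $T=\begin{bmatrix} X & Y \\ Z & W \end{bmatrix}$ with entries in $\mathcal{C}_2(\mathcal{H})$, Lemma \ref{L.6.3} gives $\|T\|_2^2=\|X\|_2^2+\|Y\|_2^2+\|Z\|_2^2+\|W\|_2^2$, while a direct multiplication together with the cyclic property of the trace yields $\mathrm{tr}(T^2)=\mathrm{tr}(X^2)+2\,\mathrm{tr}(YZ)+\mathrm{tr}(W^2)$. Combining these with Theorem \ref{T.1.3}, everything in parts (i)--(vii) is a matter of substitution; parts (viii)--(ix) I instead handle using Proposition \ref{P.7.2}(ii) because the matrix under consideration is self-adjoint.

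Concretely, the itemized computations will run as follows. For (i) and (ii) the off-diagonal block form gives $\|T\|_2^2=\|A\|_2^2+\|B\|_2^2$ and $\mathrm{tr}(T^2)=2\,\mathrm{tr}(AB)$ (resp.\ $2e^{i\theta}\mathrm{tr}(AB)$), whose modulus is invariant under swapping $A,B$ or multiplying one entry by $e^{i\theta}$; Theorem \ref{T.1.3} then gives equality. For (iii) I verify $T^2=0$ by a single block product, so $\mathrm{tr}(T^2)=0$ and $\|T\|_2^2=4\|A\|_2^2$. For (iv) I get $\|T\|_2^2=2\|A\|_2^2$ and $\mathrm{tr}(T^2)=2\,\mathrm{tr}(A^2)$, which is exactly twice the expression inside $w_{(2,\nu)}^2(A)$. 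For (v) the trace formula gives $\mathrm{tr}(T^2)=\mathrm{tr}(A^2)$ while $\|T\|_2^2=\|A\|_2^2+\|B\|_2^2$; the claimed splitting is then immediate. For (vi) the hypothesis $\mathrm{tr}(BC)=0$ together with $\mathrm{tr}((iA)^2)=-\mathrm{tr}(A^2)$ makes $\mathrm{tr}(T^2)=0$, so only the $\|T\|_2^2$ term survives.

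For (vii) the single subtle input is that $A,B\geq 0$ implies $\mathrm{tr}(AB)=\mathrm{tr}(A^{1/2}BA^{1/2})\geq 0$; hence $|\mathrm{tr}(T^2)|=2\,\mathrm{tr}(AB)$. Theorem \ref{T.1.3} then yields $w_{(2,\nu)}^2(T)=(2\nu^2-2\nu+1)(\|A\|_2^2+\|B\|_2^2)+4\nu(1-\nu)\,\mathrm{tr}(AB)$, and on the other side I expand $\|A\pm B\|_2^2=\|A\|_2^2\pm 2\,\mathrm{tr}(AB)+\|B\|_2^2$ (using self-adjointness) and use the identities $1+(1-2\nu)^2=2(2\nu^2-2\nu+1)$ and $1-(1-2\nu)^2=4\nu(1-\nu)$ to match the two sides. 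For (viii) and (ix) the matrix $T$ is self-adjoint, so Proposition \ref{P.7.2}(ii) gives $w_{(2,\nu)}(T)=\|T\|_2$; in (viii) Lemma \ref{L.6.3} gives $\|T\|_2^2=\|A\|_2^2+\|B\|_2^2=w_{(2,\nu)}^2(A)+w_{(2,\nu)}^2(B)$ (since self-adjointness of $A,B$ makes each term reduce to its Hilbert--Schmidt norm squared), and in (ix) I simply observe $\|T\|_2^2=2\|A\|_2^2+2\|B\|_2^2=\|A+B\|_2^2+\|A-B\|_2^2=w_{(2,\nu)}^2(A+B)+w_{(2,\nu)}^2(A-B)$.

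The only non-mechanical step is the sign argument for $\mathrm{tr}(AB)$ in part (vii); the other eight items are straightforward applications of the two template identities $\|T\|_2^2=\sum\|\cdot\|_2^2$ and $\mathrm{tr}(T^2)=\mathrm{tr}(X^2)+2\,\mathrm{tr}(YZ)+\mathrm{tr}(W^2)$ followed by plugging into Theorem \ref{T.1.3}. I expect minor bookkeeping to be the principal risk: tracking when $2\nu(1-\nu)|\mathrm{tr}(A^2)|$ vanishes (as in (iii), (vi)) versus when it is genuinely needed (as in (iv), (v), (vii)), and being careful in (viii)--(ix) that the self-adjointness of the blocks is exactly what collapses $w_{(2,\nu)}^2$ to $\|\cdot\|_2^2$.
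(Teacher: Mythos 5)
Your proof is correct: all nine identities check out, including the one genuinely non-mechanical point, namely that $A,B\geq 0$ gives ${\rm tr}(AB)={\rm tr}(A^{1/2}BA^{1/2})\geq 0$ in (vii), and the algebra $1+(1-2\nu)^2=2(2\nu^2-2\nu+1)$, $1-(1-2\nu)^2=4\nu(1-\nu)$ does match the two sides there. For items (iv)--(vii) your route coincides with the paper's: substitute $\|T\|_2^2$ from Lemma \ref{L.6.3} and $|{\rm tr}(T^2)|$ into Theorem \ref{T.1.3}. You diverge on the remaining items. For (i) and (ii) the paper conjugates by the unitaries $\left[\begin{smallmatrix} 0&I\\ I&0\end{smallmatrix}\right]$ and ${\rm diag}(I,e^{i\theta/2}I)$ and invokes weak unitary invariance (Proposition \ref{P.9.2}(i)), whereas you simply note that $\|T\|_2^2=\|A\|_2^2+\|B\|_2^2$ and $|{\rm tr}(T^2)|=2|{\rm tr}(AB)|$ are symmetric in $A,B$ and insensitive to a unimodular factor; this is at least as clean and makes (ii) transparent. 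For (iii) the paper also evaluates $\|T\|_2$ by a unitary conjugation before using Lemma \ref{L.6.3}, a detour your direct application of the lemma avoids. For (viii) the paper stays with Theorem \ref{T.1.3}, using positivity of $A^2,B^2$ to split $|{\rm tr}(A^2+B^2)|$, and for (ix) it conjugates by $\tfrac{1}{\sqrt2}\left[\begin{smallmatrix} I&I\\ -I&I\end{smallmatrix}\right]$ to reduce to (viii); your observation that all matrices in sight are self-adjoint, so Proposition \ref{P.7.2}(ii) collapses $w_{(2,\nu)}$ to $\|\cdot\|_2$ and the parallelogram identity (Lemma \ref{L.1.4}) finishes (ix), is shorter and bypasses the conjugation entirely. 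In short, you trade the paper's structural unitary-invariance arguments for uniform substitution into the master formula plus the self-adjointness shortcut; both are sound, and your version is somewhat more economical.
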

\begin{proof}
(i)
Let $U = \begin{bmatrix}
0 & I \\
I & 0
\end{bmatrix}$. Then $U$ is a unitary operator on $\mathcal{H}\oplus \mathcal{H}$ and by Proposition \ref{P.9.2}(i) we have
\begin{align*}
w_{_{(2,\nu)}}\left(\begin{bmatrix}
0 & A \\
B & 0
\end{bmatrix}\right) = w_{_{(2,\nu)}}\left(U\begin{bmatrix}
0 & A \\
B & 0
\end{bmatrix}U^*\right) = w_{_{(2,\nu)}}\left(\begin{bmatrix}
0 & B \\
A & 0
\end{bmatrix}\right).
\end{align*}
(ii) It is similar than (i), only we use $\begin{bmatrix}
I & 0 \\
0 & e^{i\theta/2}I
\end{bmatrix}$ instead of $\begin{bmatrix}
0 & I \\
I & 0
\end{bmatrix}$.

(iii)
Let $U = \frac{1}{\sqrt{2}}\begin{bmatrix}
I & I \\
-I & I
\end{bmatrix}$. Then $U$ is a unitary operator on $\mathcal{H}\oplus \mathcal{H}$.
Since
${\begin{bmatrix}
A & A \\
-A & -A
\end{bmatrix}}^2 = 0$, by Theorem \ref{T.1.3}, we have
\begin{align*}
w_{_{(2,\nu)}}^2\left(\begin{bmatrix}
A & A \\
-A & -A
\end{bmatrix}\right) = (2\nu^2-2\nu+1){\left\|\begin{bmatrix}
A & A \\
-A & -A
\end{bmatrix}\right\|}_{2}^2.
\end{align*}
Also, since the Hilbert--Schmidt norm ${\|\cdot\|}_{2}$ is unitarily invariant, we have
\begin{align*}
{\left\|\begin{bmatrix}
A & A \\
-A & -A
\end{bmatrix}\right\|}_{2} =
{\left\|U\begin{bmatrix}
A & A \\
-A & -A
\end{bmatrix}U^*\right\|}_{2}
= 2{\left\|\begin{bmatrix}
0 & 0 \\
A & 0
\end{bmatrix}\right\|}_{2}.
\end{align*}
Now, utilizing Lemma \ref{L.6.3}, we deduce the desired result.

(iv) From Theorem \ref{T.1.3} and Lemma \ref{L.6.3} it follows that
\begin{align*}
w_{_{(2,\nu)}}^2\left(\begin{bmatrix}
0 & A \\
A & 0
\end{bmatrix}\right) &= (2\nu^2-2\nu+1){\left\|\begin{bmatrix}
0 & A \\
A & 0
\end{bmatrix}\right\|}_{2}^2 + 2\nu(1-\nu)\left|{\rm tr}\left({\begin{bmatrix}
0 & A \\
A & 0
\end{bmatrix}}^2\right)\right|
\\& = (2\nu^2-2\nu+1)\left({\|A\|}_{2}^2 + {\|A\|}_{2}^2\right) + 2\nu(1-\nu)\left|{\rm tr}\left(\begin{bmatrix}
A^2 & 0 \\
0 & A^2
\end{bmatrix}\right)\right|
\\& = 2\Big((2\nu^2-2\nu+1){\|A\|}_{2}^2 + 2\nu(1-\nu)\left|{\rm tr}A^2\right|\Big)
= 2w_{_{(2,\nu)}}^2(A),
\end{align*}
which gives (iv).

(v) By Theorem \ref{T.1.3} and Lemma \ref{L.6.3}, we have
\begin{align*}
w_{_{(2,\nu)}}^2\left(\begin{bmatrix}
A & B \\
0 & 0
\end{bmatrix}\right) &= (2\nu^2-2\nu+1){\left\|\begin{bmatrix}
A & B \\
0 & 0
\end{bmatrix}\right\|}_{2}^2 + 2\nu(1-\nu)\left|{\rm tr}\left({\begin{bmatrix}
A & B \\
0 & 0
\end{bmatrix}}^2\right)\right|
\\& = (2\nu^2-2\nu+1)\left({\left\|A\right\|}_{2}^2 + {\left\|B\right\|}_{2}^2\right) + 2\nu(1-\nu)\left|{\rm tr}\left(\begin{bmatrix}
A^2 & AB \\
0 & 0
\end{bmatrix}\right)\right|
\\& = (2\nu^2-2\nu+1){\|A\|}_{2}^2 + (2\nu^2-2\nu+1){\|B\|}_{2}^2 + 2\nu(1-\nu)\left|{\rm tr}(A^2)\right|
\\& = w_{_{(2,\nu)}}^2(A) + (2\nu^2-2\nu+1){\|B\|}_{2}^2.
\end{align*}

(vi) Let ${\rm tr}(BC) = 0$. We have
\begin{align*}
\left|{\rm tr}\left({\begin{bmatrix}
A & B \\
C & iA
\end{bmatrix}}^2\right)\right| = \left|{\rm tr}\left({\begin{bmatrix}
A^2+BC & AB+iBA \\
CA+iAC & CB-A^2
\end{bmatrix}}^2\right)\right|
= 2\left|{\rm tr}(BC)\right| = 0.
\end{align*}
Also, by Lemma \ref{L.6.3}, we get
${\left\|\begin{bmatrix}
A & B \\
C & iA
\end{bmatrix}\right\|}_{2}^2 = 2{\|A\|}_{2}^2 + {\|B\|}_{2}^2+{\|C\|}_{2}^2$.
So, by Theorem \ref{T.1.3}, we deduce the desired result.

(vii) Let $A, B$ be positive. Then by Theorem \ref{T.1.3} and Lemma \ref{L.6.3} it follows that
\begin{align*}
w_{_{(2,\nu)}}^2\left(\begin{bmatrix}
0 & A \\
B & 0
\end{bmatrix}\right) &= (2\nu^2-2\nu+1){\left\|\begin{bmatrix}
0 & A \\
B & 0
\end{bmatrix}\right\|}_{2}^2 + 2\nu(1-\nu)\left|{\rm tr}\left({\begin{bmatrix}
0 & A \\
B & 0
\end{bmatrix}}^2\right)\right|
\\& = (2\nu^2-2\nu+1)\left({\|A\|}_{2}^2 + {\|B\|}_{2}^2\right) + 2\nu(1-\nu)\left|{\rm tr}\left(\begin{bmatrix}
AB & 0 \\
0 & BA
\end{bmatrix}\right)\right|
\\& = (2\nu^2-2\nu+1)\left({\|A\|}_{2}^2 + {\|B\|}_{2}^2\right) + 4\nu(1-\nu)\left|{\rm tr}\left(AB\right)\right|
\\& = (2\nu^2-2\nu+1)\left({\rm tr}\left(A^2\right) + {\rm tr}\left(B^2\right)\right) + 4\nu(1-\nu){\rm tr}\left(AB\right)
\\& = \frac{{\rm tr}\left((A+B)^2\right) + (1-2\nu)^2{\rm tr}\left((A-B)^2\right)}{2}
\\& = \frac{{\|A+B\|}_{2}^2 + (1-2\nu)^2{\|A-B\|}_{2}^2}{2}.
\end{align*}
Hence $w_{_{(2,\nu)}}^2\left(\begin{bmatrix}
0 & A \\
B & 0
\end{bmatrix}\right) = \frac{{\|A+B\|}_{2}^2 + (1-2\nu)^2{\|A-B\|}_{2}^2}{2}$.

(viii) Let $A, B$ be self-adjoint. Then $A^2, B^2$ are positive and so
\begin{align*}
\left|{\rm tr}(A^2 + B^2)\right| = \left|{\rm tr}(A^2)\right| + \left|{\rm tr}(B^2)\right|.
\end{align*}
Now, by Theorem \ref{T.1.3} and Lemma \ref{L.6.3}, we have
\begin{align*}
w_{_{(2,\nu)}}^2\left(\begin{bmatrix}
A & 0 \\
0 & B
\end{bmatrix}\right) &= (2\nu^2-2\nu+1){\left\|\begin{bmatrix}
A & 0 \\
0 & B
\end{bmatrix}\right\|}_{2}^2 + 2\nu(1-\nu)\left|{\rm tr}\left({\begin{bmatrix}
A & 0 \\
0 & B
\end{bmatrix}}^2\right)\right|
\\& = (2\nu^2-2\nu+1)\left({\|A\|}_{2}^2 + {\|B\|}_{2}^2\right) + 2\nu(1-\nu)\left|{\rm tr}\left(\begin{bmatrix}
A^2 & 0 \\
0 & B^2
\end{bmatrix}\right)\right|
\\& = (2\nu^2-2\nu+1)\left({\|A\|}_{2}^2 + {\|B\|}_{2}^2\right) + 2\nu(1-\nu)\left(\left|{\rm tr}(A^2)\right| + \left|{\rm tr}(B^2)\right|\right)
\\& = w_{_{(2,\nu)}}^2(A) + w_{_{(2,\nu)}}^2(B).
\end{align*}
(ix) Let $A, B$ be self-adjoint and let
$U = \frac{1}{\sqrt{2}}\begin{bmatrix}
I & I \\
-I & I
\end{bmatrix}$.
So, by Proposition \ref{P.9.2}(i) and (viii), we obtain
\begin{align*}
w_{_{(2,\nu)}}^2\left(\begin{bmatrix}
A & B \\
B & A
\end{bmatrix}\right) &= w_{_{(2,\nu)}}^2\left(U\begin{bmatrix}
A & B \\
B & A
\end{bmatrix}U^*\right)
\\& = w_{_{(2,\nu)}}^2\left(\begin{bmatrix}
A+B & 0 \\
0 & A-B
\end{bmatrix}\right)
\\& = w_{_{(2,\nu)}}^2(A+B) + w_{_{(2,\nu)}}^2(A-B).
\end{align*}
\end{proof}
In the following theorem we give an upper bound for the weighted Hilbert--Schmidt numerical radius of the
general $2\times2$ operator matrix
$\begin{bmatrix}
A & B \\
C & D
\end{bmatrix}$.
\begin{theorem}\label{T.8.3}
Let $A, B, C, D \in \mathcal{C}_2(\mathcal{H})$ and let $0\leq \nu \leq 1$. Then
\begin{align*}
w_{_{(2,\nu)}}^2\left(\begin{bmatrix}
A & B \\
C & D
\end{bmatrix}\right) \leq w_{_{(2,\nu)}}^2(A) + w_{_{(2,\nu)}}^2(D) + {\|B\|}_{2}^2 + {\|C\|}_{2}^2.
\end{align*}
\end{theorem}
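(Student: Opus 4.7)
The plan is to apply Theorem~\ref{T.1.3} to the block operator $M := \begin{bmatrix} A & B \\ C & D \end{bmatrix}$ and reduce the problem to estimating $\|M\|_2^2$ and $|\mathrm{tr}(M^2)|$ in terms of the entries $A, B, C, D$. By Theorem~\ref{T.1.3},
\[
w_{_{(2,\nu)}}^2(M) = (2\nu^2 - 2\nu + 1)\|M\|_2^2 + 2\nu(1-\nu)\bigl|\mathrm{tr}(M^2)\bigr|,
\]
and by Lemma~\ref{L.6.3} we have $\|M\|_2^2 = \|A\|_2^2 + \|B\|_2^2 + \|C\|_2^2 + \|D\|_2^2$.

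Next, a direct block multiplication gives $M^2 = \begin{bmatrix} A^2+BC & AB+BD \\ CA+DC & CB+D^2 \end{bmatrix}$, so the cyclicity of the trace yields
\[
\mathrm{tr}(M^2) = \mathrm{tr}(A^2) + \mathrm{tr}(D^2) + 2\,\mathrm{tr}(BC).
\]
I would then apply the triangle inequality together with the trace Cauchy--Schwarz inequality \eqref{I.12.1} (giving $|\mathrm{tr}(BC)| \leq \|B\|_2\|C\|_2$) and the AM--GM inequality ($2\|B\|_2\|C\|_2 \leq \|B\|_2^2 + \|C\|_2^2$) to obtain
\[
\bigl|\mathrm{tr}(M^2)\bigr| \leq |\mathrm{tr}(A^2)| + |\mathrm{tr}(D^2)| + \|B\|_2^2 + \|C\|_2^2.
\]

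The key algebraic observation that ties everything together is the identity $(2\nu^2 - 2\nu + 1) + 2\nu(1-\nu) = 1$. When the two bounds above are substituted into the formula for $w_{_{(2,\nu)}}^2(M)$, the coefficients of $\|B\|_2^2$ and $\|C\|_2^2$ coming from $(2\nu^2-2\nu+1)\|M\|_2^2$ and from $2\nu(1-\nu)\cdot(\|B\|_2^2+\|C\|_2^2)$ collect exactly to $1$, while the remaining terms involving $A$ and $D$ regroup, via a second application of Theorem~\ref{T.1.3}, into $w_{_{(2,\nu)}}^2(A) + w_{_{(2,\nu)}}^2(D)$. This delivers the claimed inequality.

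There is no real obstacle here beyond the cross-term bookkeeping: the estimate is essentially tight in the diagonal blocks (equality in those slots after regrouping), and the only lossy step is the Cauchy--Schwarz/AM--GM bound on $2\,\mathrm{tr}(BC)$, which is what prevents the inequality from being an identity in general.
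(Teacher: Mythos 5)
Your proposal is correct and follows essentially the same route as the paper's proof: apply Theorem~\ref{T.1.3} to the block matrix, expand $\|M\|_2^2$ via Lemma~\ref{L.6.3}, compute ${\rm tr}(M^2)={\rm tr}(A^2)+{\rm tr}(D^2)+2\,{\rm tr}(BC)$, and then bound the cross term with the triangle inequality, \eqref{I.12.1}, and AM--GM, using $(2\nu^2-2\nu+1)+2\nu(1-\nu)=1$ to collect the coefficients of $\|B\|_2^2$ and $\|C\|_2^2$. Nothing to add.
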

\begin{proof}
By Theorem \ref{T.1.3}, Lemma \ref{L.6.3} and the triangle inequality, we have
\begingroup\makeatletter\def\f@size{9}\check@mathfonts
\begin{align}\label{I.1.T.8.3}
w_{_{(2,\nu)}}^2\left(\begin{bmatrix}
A & B \\
C & D
\end{bmatrix}\right) &= (2\nu^2-2\nu+1){\left\|\begin{bmatrix}
A & B \\
C & D
\end{bmatrix}\right\|}_{2}^2 + 2\nu(1-\nu)\left|{\rm tr}\left({\begin{bmatrix}
A & B \\
C & D
\end{bmatrix}}^2\right)\right|\nonumber
\\& = (2\nu^2-2\nu+1)\left({\|A\|}_{2}^2 + {\|B\|}_{2}^2 + {\|C\|}_{2}^2 + {\|D\|}_{2}^2\right)\nonumber
\\& \qquad \qquad + 2\nu(1-\nu)\left|{\rm tr}\left(A^2\right) + {\rm tr}\left(D^2\right) + 2 {\rm tr}\left(BC\right)\right|\nonumber
\\& \leq (2\nu^2-2\nu+1){\|A\|}_{2}^2 + 2\nu(1-\nu)\left|{\rm tr}\left(A^2\right)\right|\nonumber
\\& \qquad \qquad + (2\nu^2-2\nu+1){\|D\|}_{2}^2 + 2\nu(1-\nu)\left|{\rm tr}\left(D^2\right)\right|\nonumber
\\& \qquad \qquad \qquad + (2\nu^2-2\nu+1)\left({\|B\|}_{2}^2 + {\|C\|}_{2}^2\right) + 4\nu(1-\nu)\left|{\rm tr}\left(BC\right)\right|\nonumber
\\& = w_{_{(2,\nu)}}^2(A) + w_{_{(2,\nu)}}^2(D)
+ (2\nu^2-2\nu+1)\left({\|B\|}_{2}^2 + {\|C\|}_{2}^2\right) + 4\nu(1-\nu)\left|{\rm tr}\left(BC\right)\right|.
\end{align}
\endgroup
Hence, by \eqref{I.1.T.8.3}, \eqref{I.12.1} and the arithmetic-geometric mean inequality, we conclude
\begingroup\makeatletter\def\f@size{9}\check@mathfonts
\begin{align*}
w_{_{(2,\nu)}}^2\left(\begin{bmatrix}
A & B \\
C & D
\end{bmatrix}\right) & \leq w_{_{(2,\nu)}}^2(A) + w_{_{(2,\nu)}}^2(D)
+ (2\nu^2-2\nu+1)\left({\|B\|}_{2}^2 + {\|C\|}_{2}^2\right) + 4\nu(1-\nu){\|B\|}_{2}{\|C\|}_{2}
\\& \leq w_{_{(2,\nu)}}^2(A) + w_{_{(2,\nu)}}^2(D)
+ (2\nu^2-2\nu+1)\left({\|B\|}_{2}^2 + {\|C\|}_{2}^2\right) + 2\nu(1-\nu)\left({\|B\|}_{2}^2+{\|C\|}_{2}^2\right)
\\& = w_{_{(2,\nu)}}^2(A) + w_{_{(2,\nu)}}^2(D) + {\|B\|}_{2}^2 + {\|C\|}_{2}^2.
\end{align*}
\endgroup
\end{proof}
\begin{remark}\label{R.9.3}
Let $A, B, C, D\in \mathcal{C}_2(\mathcal{H})$ and let $0\leq \nu \leq 1$. If ${\rm tr}(BC) = 0$, then
by the inequality \eqref{I.1.T.8.3} we have
\begin{align*}
w_{_{(2,\nu)}}^2\left(\begin{bmatrix}
A & B \\
C & D
\end{bmatrix}\right) \leq w_{_{(2,\nu)}}^2(A) + w_{_{(2,\nu)}}^2(D) + (2\nu^2-2\nu+1)\left({\|B\|}_{2}^2 + {\|C\|}_{2}^2\right).
\end{align*}
\end{remark}
As an immediate consequence of Theorem \ref{T.8.3}, we have the following result.
\begin{corollary}\label{C.10.3}
Let $A, B\in \mathcal{C}_2(\mathcal{H})$ and let $0\leq \nu \leq 1$. Then
\begin{itemize}
\item[(i)] $w_{_{(2,\nu)}}\left(\begin{bmatrix}
A & 0 \\
0 & B
\end{bmatrix}\right) \leq \sqrt{w_{_{(2,\nu)}}^2(A) + w_{_{(2,\nu)}}^2(B)}$.
\item[(ii)] $w_{_{(2,\nu)}}\left(\begin{bmatrix}
0 & A \\
B & 0
\end{bmatrix}\right) \leq  \sqrt{{\|A\|}_{2}^2 + {\|B\|}_{2}^2}$.
\end{itemize}
\end{corollary}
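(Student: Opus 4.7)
The plan is to derive both inequalities as direct specializations of Theorem \ref{T.8.3}, which is already in hand and gives the general upper bound
\begin{align*}
w_{_{(2,\nu)}}^2\left(\begin{bmatrix} A & B \\ C & D \end{bmatrix}\right) \leq w_{_{(2,\nu)}}^2(A) + w_{_{(2,\nu)}}^2(D) + {\|B\|}_{2}^2 + {\|C\|}_{2}^2.
\end{align*}
Both parts of Corollary \ref{C.10.3} just correspond to zeroing out two of the four block entries, so there is no genuine obstacle; the work is essentially substitution plus the observation that $w_{_{(2,\nu)}}(0) = 0 = {\|0\|}_{2}$.

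For part (i), I would apply Theorem \ref{T.8.3} to the diagonal block matrix, i.e.\ take the off-diagonal entries to be $0$. The two norm terms ${\|B\|}_{2}^2$ and ${\|C\|}_{2}^2$ vanish, leaving
\begin{align*}
w_{_{(2,\nu)}}^2\left(\begin{bmatrix} A & 0 \\ 0 & B \end{bmatrix}\right) \leq w_{_{(2,\nu)}}^2(A) + w_{_{(2,\nu)}}^2(B),
\end{align*}
and taking square roots delivers (i). It is worth remarking that, under the additional hypothesis that $A$ and $B$ be self-adjoint, Theorem \ref{T.7.3}(viii) shows this inequality is actually an equality, which serves as a nice sanity check.

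For part (ii), I would instead apply Theorem \ref{T.8.3} to the off-diagonal block matrix, taking the diagonal entries to be $0$. Now $w_{_{(2,\nu)}}^2(A)$ and $w_{_{(2,\nu)}}^2(D)$ drop out, and the right-hand side becomes precisely ${\|A\|}_{2}^2 + {\|B\|}_{2}^2$ (after relabeling the off-diagonal entries to match the statement). Taking square roots yields (ii). The entire proof is short enough that it can be written as a single sentence pointing to Theorem \ref{T.8.3}, with the two substitutions displayed; no further estimates are needed.
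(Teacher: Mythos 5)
Your proposal is correct and matches the paper exactly: the paper presents Corollary \ref{C.10.3} as an immediate consequence of Theorem \ref{T.8.3}, obtained by setting the off-diagonal blocks (for (i)) or the diagonal blocks (for (ii)) to zero, which is precisely your argument. Nothing further is needed.
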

Our final result in this section is a generalization of \cite[Corollary~2]{A.K.2}.
\begin{theorem}\label{T.11.3}
Let $A\in \mathcal{C}_2(\mathcal{H})$ and let $0\leq \nu \leq 1$. Then
\begin{align*}
\frac{1+|1-2\nu|}{\sqrt{2}}w_{_{(2,\nu)}}(A) \leq w_{_{(2,\nu)}}\left(\begin{bmatrix}
0 & \mathfrak{R}_{\nu}(A) \\
\mathfrak{I}_{\nu}(A) & 0
\end{bmatrix}\right) \leq \sqrt{2}w_{_{(2,\nu)}}(A).
\end{align*}
\end{theorem}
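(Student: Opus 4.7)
\emph{Plan.} Set $X := \begin{bmatrix} 0 & \mathfrak{R}_{\nu}(A) \\ \mathfrak{I}_{\nu}(A) & 0 \end{bmatrix}$, $t := \mathrm{tr}(A^2)$, $r := 2\nu^2-2\nu+1$ and $s := |1-2\nu|$, noting that $r = \tfrac{1+s^2}{2}$ and $r \geq s$ on $[0,1]$. My strategy is to first produce a closed form for $w_{_{(2,\nu)}}^2(X)$ via Theorem \ref{T.1.3}, then compare it with $w_{_{(2,\nu)}}^2(A) = r\|A\|_2^2 + 2\nu(1-\nu)|t|$ coefficient by coefficient.

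For the closed form, Lemma \ref{L.6.3} gives $\|X\|_2^2 = \|\mathfrak{R}_{\nu}(A)\|_2^2 + \|\mathfrak{I}_{\nu}(A)\|_2^2$. The identity $\|\mathfrak{R}_{\nu}(e^{i\theta}A)\|_2^2 = r\|A\|_2^2 + 2\nu(1-\nu)\mathfrak{R}(e^{2i\theta}t)$ derived inside the proof of Theorem \ref{T.1.3}, specialized to $\theta=0$ and $\theta=-\pi/2$ (using $\mathfrak{I}_{\nu}(A) = \mathfrak{R}_{\nu}(-iA)$), makes the two $\mathfrak{R}(t)$-contributions cancel, yielding $\|X\|_2^2 = 2r\|A\|_2^2$. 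Since $X^2$ is block-diagonal with diagonal blocks $\mathfrak{R}_{\nu}(A)\mathfrak{I}_{\nu}(A)$ and $\mathfrak{I}_{\nu}(A)\mathfrak{R}_{\nu}(A)$, trace cyclicity gives $\mathrm{tr}(X^2) = 2\,\mathrm{tr}(\mathfrak{R}_{\nu}(A)\mathfrak{I}_{\nu}(A))$; expanding the product and using $\mathrm{tr}((A^*)^2) = \bar{t}$ together with $\mathrm{tr}(AA^*)=\mathrm{tr}(A^*A)$ yields $|\mathrm{tr}(X^2)| = 2\bigl|\nu^2 t - (1-\nu)^2 \bar{t}\bigr|$. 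Theorem \ref{T.1.3} now produces the key formula
\begin{equation*}
w_{_{(2,\nu)}}^2(X) \;=\; 2r^2\|A\|_2^2 \;+\; 4\nu(1-\nu)\,\bigl|\nu^2 t - (1-\nu)^2 \bar{t}\bigr|.
\end{equation*}

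Both inequalities then reduce to estimates on $\bigl|\nu^2 t - (1-\nu)^2 \bar t\bigr|$. For the upper bound, the triangle inequality gives $\leq r|t|$, and substitution yields $w_{_{(2,\nu)}}^2(X) \leq 2r\,w_{_{(2,\nu)}}^2(A) \leq 2\,w_{_{(2,\nu)}}^2(A)$, using $r \leq 1$ on $[0,1]$. For the lower bound, writing $t = x+iy$ and expanding,
\begin{equation*}
\bigl|\nu^2 t - (1-\nu)^2 \bar t\bigr|^2 = (1-2\nu)^2 x^2 + r^2 y^2 \;\geq\; s^2(x^2+y^2) = s^2|t|^2
\end{equation*}
since $r \geq s$. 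Substituting $\bigl|\nu^2 t - (1-\nu)^2 \bar t\bigr| \geq s|t|$ into the key formula and invoking the algebraic identities $4r-(1+s)^2 = (1-s)^2 = (1+s)^2-4s$, the target $\tfrac{(1+s)^2}{2}\,w_{_{(2,\nu)}}^2(A) \leq w_{_{(2,\nu)}}^2(X)$ collapses to
\begin{equation*}
(1-s)^2\bigl[r\|A\|_2^2 - 2\nu(1-\nu)|t|\bigr] \geq 0,
\end{equation*}
which is immediate from $|t|\leq \|A\|_2^2$ (by \eqref{I.12.1}) together with $r - 2\nu(1-\nu) = (1-2\nu)^2 \geq 0$. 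The main obstacle is spotting the hidden $(1-s)^2$ factor that materializes symmetrically on both sides of the lower inequality; once the relation $r=(1+s^2)/2$ is exploited, the remaining reduction is essentially bookkeeping.
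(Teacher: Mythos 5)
Your proof is correct, but it takes a genuinely different route from the paper's. The paper never computes $w_{_{(2,\nu)}}\left(\begin{bmatrix} 0 & \mathfrak{R}_{\nu}(A) \\ \mathfrak{I}_{\nu}(A) & 0 \end{bmatrix}\right)$ exactly: for the upper bound it conjugates by $U=\tfrac{1}{\sqrt{2}}\begin{bmatrix} I & I \\ -I & I\end{bmatrix}$ (after rescaling by $\tfrac{1}{2\nu}$ and using $\mathfrak{R}_{\nu}(A)+i\mathfrak{I}_{\nu}(A)=2\nu A$, $\mathfrak{R}_{\nu}(A)-i\mathfrak{I}_{\nu}(A)=2(1-\nu)A^*$), splits the result into diagonal and off-diagonal pieces, and invokes Corollary \ref{C.10.3}(i) and Theorem \ref{T.7.3}(ii),(iv); for the lower bound it lifts $2\nu A$ and $2(1-\nu)A^*$ to off-diagonal $2\times 2$ matrices via Theorem \ref{T.7.3}(iv), applies the triangle inequality and Theorem \ref{T.7.3}(i)--(ii), and takes the maximum of the two resulting estimates. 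You instead derive the closed form $w_{_{(2,\nu)}}^2(X)=2r^2\|A\|_2^2+4\nu(1-\nu)\bigl|\nu^2 t-(1-\nu)^2\bar t\bigr|$ directly from Theorem \ref{T.1.3} and Lemma \ref{L.6.3} and reduce both inequalities to scalar algebra in $r=\tfrac{1+s^2}{2}$ and $s=|1-2\nu|$; I checked the cancellation $\|X\|_2^2=2r\|A\|_2^2$, the identity $|\nu^2t-(1-\nu)^2\bar t|^2=s^2x^2+r^2y^2$, and the final factorization through $(1-s)^2$, and all are sound. Your route buys three things: it is uniform in $\nu$ (the paper's upper-bound argument divides by $2\nu$ and so implicitly needs $\nu\neq 0$ or a limiting/separate argument), it avoids the machinery of Theorem \ref{T.7.3} and Corollary \ref{C.10.3} almost entirely, and it yields the strictly sharper upper bound $w_{_{(2,\nu)}}(X)\leq\sqrt{2(2\nu^2-2\nu+1)}\,w_{_{(2,\nu)}}(A)\leq\sqrt{2}\,w_{_{(2,\nu)}}(A)$ as a by-product. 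The trade-off is that the paper's argument is structural and would survive in settings where no trace formula like Theorem \ref{T.1.3} is available, whereas yours is tied to the Hilbert--Schmidt norm.
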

\begin{proof}
First, note that
\begin{align}\label{I.1.T.11.3}
\mathfrak{R}_{\nu}(A) + i\mathfrak{I}_{\nu}(A) = 2\nu A \quad \mbox{and} \quad  \mathfrak{R}_{\nu}(A) - i\mathfrak{I}_{\nu}(A) = 2(1-\nu) A^*.
\end{align}
Now, let $U = \frac{1}{\sqrt{2}}\begin{bmatrix}
I & I \\
-I & I
\end{bmatrix}$.
Then, by Proposition \ref{P.9.2}(i), \eqref{I.1.T.11.3},
Corollary \ref{C.10.3}(i), Theorem \ref{T.7.3}(ii), and Theorem \ref{T.7.3}(iv), we have
\begin{align*}
w_{_{(2,\nu)}}\left(\begin{bmatrix}
0 & \mathfrak{R}_{\nu}(A) \\
\mathfrak{I}_{\nu}(A) & 0
\end{bmatrix}\right)
& = 2\nu w_{_{(2,\nu)}}\left(\begin{bmatrix}
0 & \frac{1}{2\nu}\mathfrak{R}_{\nu}(A) \\
\frac{i}{2\nu}\mathfrak{I}_{\nu}(A) & 0
\end{bmatrix}\right)
\\& = 2\nu w_{_{(2,\nu)}}\left(U\begin{bmatrix}
0 & \frac{1}{2\nu}\mathfrak{R}_{\nu}(A) \\
\frac{i}{2\nu}\mathfrak{I}_{\nu}(A) & 0
\end{bmatrix}U^*\right)
\\& = \nu w_{_{(2,\nu)}}\left(\begin{bmatrix}
A & \frac{1-\nu}{\nu}A^* \\
-\frac{1-\nu}{\nu}A^* & -A
\end{bmatrix}\right)
\\& \leq \nu w_{_{(2,\nu)}}\left(\begin{bmatrix}
A & 0 \\
0 & -A
\end{bmatrix}\right)
+ \nu w_{_{(2,\nu)}}\left(\begin{bmatrix}
0 & \frac{1-\nu}{\nu}A^* \\
-\frac{1-\nu}{\nu}A^* & 0
\end{bmatrix}\right)
\\& \leq \sqrt{2}\nu w_{_{(2,\nu)}}(A)
+ \nu w_{_{(2,\nu)}}\left(\begin{bmatrix}
0 & \frac{1-\nu}{\nu}A^* \\
\frac{1-\nu}{\nu}A^* & 0
\end{bmatrix}\right)
\\& = \sqrt{2}\nu w_{_{(2,\nu)}}(A)
+ \sqrt{2}\nu w_{_{(2,\nu)}}\left(\frac{1-\nu}{\nu}A^*\right)
\\& = \sqrt{2}\nu w_{_{(2,\nu)}}(A)
+ \sqrt{2}(1-\nu) w_{_{(2,\nu)}}(A) = \sqrt{2}w_{_{(2,\nu)}}(A).
\end{align*}
Thus
\begin{align}\label{I.2.T.11.3}
w_{_{(2,\nu)}}\left(\begin{bmatrix}
0 & \mathfrak{R}_{\nu}(A) \\
\mathfrak{I}_{\nu}(A) & 0
\end{bmatrix}\right) \leq \sqrt{2}w_{_{(2,\nu)}}(A).
\end{align}
Also, by \eqref{I.1.T.11.3}, Theorem \ref{T.7.3}(iv), and Theorem \ref{T.7.3}(i)-(ii), we have
\begin{align*}
\sqrt{2}\nu w_{_{(2,\nu)}}(A) &= \frac{\sqrt{2}}{2}w_{_{(2,\nu)}}\left(\mathfrak{R}_{\nu}(A) + i\mathfrak{I}_{\nu}(A)\right)
\\& = \frac{1}{2} w_{_{(2,\nu)}}\left(\begin{bmatrix}
0 & \mathfrak{R}_{\nu}(A) + i\mathfrak{I}_{\nu}(A) \\
\mathfrak{R}_{\nu}(A) + i\mathfrak{I}_{\nu}(A) & 0
\end{bmatrix}\right)
\\& = \frac{1}{2} w_{_{(2,\nu)}}\left(\begin{bmatrix}
0 & \mathfrak{R}_{\nu}(A) \\
i\mathfrak{I}_{\nu}(A) & 0
\end{bmatrix}
+ \begin{bmatrix}
0 & i\mathfrak{I}_{\nu}(A) \\
\mathfrak{R}_{\nu}(A) & 0
\end{bmatrix}\right)
\\& \leq \frac{1}{2} w_{_{(2,\nu)}}\left(\begin{bmatrix}
0 & \mathfrak{R}_{\nu}(A) \\
i\mathfrak{I}_{\nu}(A) & 0
\end{bmatrix}\right)
+ \frac{1}{2} w_{_{(2,\nu)}}\left(\begin{bmatrix}
0 & i\mathfrak{I}_{\nu}(A) \\
\mathfrak{R}_{\nu}(A) & 0
\end{bmatrix}\right)
\\& = w_{_{(2,\nu)}}\left(\begin{bmatrix}
0 & \mathfrak{R}_{\nu}(A) \\
\mathfrak{I}_{\nu}(A) & 0
\end{bmatrix}\right),
\end{align*}
and hence
\begin{align}\label{I.3.T.11.3}
\sqrt{2}\nu w_{_{(2,\nu)}}(A) \leq w_{_{(2,\nu)}}\left(\begin{bmatrix}
0 & \mathfrak{R}_{\nu}(A) \\
\mathfrak{I}_{\nu}(A) & 0
\end{bmatrix}\right).
\end{align}
Further, by a similar argument, we have
\begin{align}\label{I.4.T.11.3}
\sqrt{2}(1-\nu) w_{_{(2,\nu)}}(A) \leq w_{_{(2,\nu)}}\left(\begin{bmatrix}
0 & \mathfrak{R}_{\nu}(A) \\
\mathfrak{I}_{\nu}(A) & 0
\end{bmatrix}\right).
\end{align}
So, by \eqref{I.3.T.11.3} and \eqref{I.4.T.11.3}, we obtain
\begin{align}\label{I.4.T.11.3}
\max\left\{\sqrt{2}\nu w_{_{(2,\nu)}}(A), \sqrt{2}(1-\nu) w_{_{(2,\nu)}}(A)\right\} \leq w_{_{(2,\nu)}}\left(\begin{bmatrix}
0 & \mathfrak{R}_{\nu}(A) \\
\mathfrak{I}_{\nu}(A) & 0
\end{bmatrix}\right),
\end{align}
or equivalently,
\begin{align}\label{I.5.T.11.3}
\frac{1+|1-2\nu|}{\sqrt{2}}w_{_{(2,\nu)}}(A) \leq w_{_{(2,\nu)}}\left(\begin{bmatrix}
0 & \mathfrak{R}_{\nu}(A) \\
\mathfrak{I}_{\nu}(A) & 0
\end{bmatrix}\right).
\end{align}
From \eqref{I.2.T.11.3} and \eqref{I.5.T.11.3}, we deduce the desired result.
\end{proof}
\section{An application}
In this section we obtain a refinement of the triangle inequality for the Hilbert--Schmidt norm.
In order to achieve our goal, we need the following lemma. It is well-known and can be found in \cite{Simon}.
\begin{lemma}\label{L.1.4}
Let $X, Y\in \mathcal{C}_2(\mathcal{H})$. Then
\begin{align*}
2\left({\|X\|}_{2}^2 + {\|Y\|}_{2}^2\right) =  {\|X-Y\|}_{2}^2 + {\|X+Y\|}_{2}^2.
\end{align*}
\end{lemma}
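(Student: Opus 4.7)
The plan is to exploit the fact that the Hilbert--Schmidt norm is induced by an inner product on $\mathcal{C}_2(\mathcal{H})$, so the identity is nothing more than the parallelogram law for that inner product. Concretely, $\mathcal{C}_2(\mathcal{H})$ is itself a Hilbert space under $\langle X, Y\rangle_{2} := {\rm tr}(Y^*X)$, and ${\|X\|}_{2}^2 = {\rm tr}(X^*X) = \langle X, X\rangle_{2}$.

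First I would expand ${\|X+Y\|}_{2}^2$ using $(X+Y)^*(X+Y) = X^*X + X^*Y + Y^*X + Y^*Y$, take the trace (which is linear), and use ${\|X\|}_{2}^2 + {\|Y\|}_{2}^2 + {\rm tr}(X^*Y) + {\rm tr}(Y^*X)$. Next I would do the analogous computation for ${\|X-Y\|}_{2}^2$, which produces the same sum of squared norms but with the two cross-terms ${\rm tr}(X^*Y)$ and ${\rm tr}(Y^*X)$ appearing with a minus sign. Adding the two expressions, the cross-terms cancel and the $X^*X$, $Y^*Y$ contributions each occur twice, yielding the claimed identity.

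There is essentially no technical obstacle: the only thing to verify is that the trace is well-defined and linear on each of the operator products involved, which follows from the general fact (used already in \eqref{I.12.1}) that the product of two Hilbert--Schmidt operators is trace class. Since ${\rm tr}$ is $\mathbb{C}$-linear, no convergence subtlety arises in the expansion, so the verification is a direct computation that mirrors the familiar finite-dimensional polarization argument. For completeness I would simply display the two expansions and add them, which suffices to establish the lemma.
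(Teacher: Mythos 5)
Your argument is correct and complete: the identity is exactly the parallelogram law for the inner product $\langle X, Y\rangle_{2}={\rm tr}(Y^*X)$ on $\mathcal{C}_2(\mathcal{H})$, and expanding ${\|X\pm Y\|}_{2}^2$ and adding so that the cross-terms cancel is the standard verification. The paper itself gives no proof, citing only \cite{Simon}, so your computation simply supplies the well-known argument the author omits.
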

\begin{theorem}\label{T.2.4}
Let $A, B \in \mathcal{C}_2(\mathcal{H})$ and let $0\leq \nu \leq 1$. Then
\begin{align*}
{\|A+B\|}_{2} \leq \sqrt{2w_{_{(2,\nu)}}^2\left(\begin{bmatrix}
0 & A \\
B^* & 0
\end{bmatrix}\right) - (1-2\nu)^2{\|A-B\|}_{2}^2}
\leq {\|A\|}_{2} + {\|B\|}_{2}.
\end{align*}
\end{theorem}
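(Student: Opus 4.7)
The plan is to reduce the whole statement to a single algebraic identity via the trace formula of Theorem~\ref{T.1.3}. Setting $X = \begin{bmatrix} 0 & A \\ B^* & 0 \end{bmatrix}$, I first use Lemma~\ref{L.6.3} to get ${\|X\|}_{2}^{2} = {\|A\|}_{2}^{2} + {\|B\|}_{2}^{2}$ and compute $X^{2} = \begin{bmatrix} AB^{*} & 0 \\ 0 & B^{*}A \end{bmatrix}$, so that ${\rm tr}(X^{2}) = 2\,{\rm tr}(B^{*}A)$ by the cyclic property of the trace. Theorem~\ref{T.1.3} then gives
\begin{align*}
w_{_{(2,\nu)}}^{2}(X) = (2\nu^{2}-2\nu+1)\bigl({\|A\|}_{2}^{2} + {\|B\|}_{2}^{2}\bigr) + 4\nu(1-\nu)\bigl|{\rm tr}(B^{*}A)\bigr|.
\end{align*}

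Writing $c := |{\rm tr}(B^{*}A)|$ and $r := \mathfrak{R}\,{\rm tr}(B^{*}A)$, one has ${\|A\pm B\|}_{2}^{2} = {\|A\|}_{2}^{2} + {\|B\|}_{2}^{2} \pm 2r$. Using the elementary identities $2(2\nu^{2}-2\nu+1) = 1 + (1-2\nu)^{2}$ and $8\nu(1-\nu) = 2 - 2(1-2\nu)^{2}$, direct substitution collapses the middle quantity of the theorem into the clean form
\begin{align*}
2w_{_{(2,\nu)}}^{2}(X) - (1-2\nu)^{2}{\|A-B\|}_{2}^{2} = {\|A+B\|}_{2}^{2} + 8\nu(1-\nu)(c-r).
\end{align*}
This identity is the heart of the argument.

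From it the two inequalities fall out immediately. Since $c \geq r$ and $\nu(1-\nu) \geq 0$ on $[0,1]$, the remainder $8\nu(1-\nu)(c-r)$ is nonnegative, yielding the lower bound ${\|A+B\|}_{2}^{2} \leq 2w_{_{(2,\nu)}}^{2}(X) - (1-2\nu)^{2}{\|A-B\|}_{2}^{2}$. For the upper bound, I re-group the same expression as $\bigl({\|A\|}_{2}^{2} + {\|B\|}_{2}^{2}\bigr) + 2c - 2(1-2\nu)^{2}(c-r)$; the last term is nonpositive, so I may drop it, and then the trace Cauchy--Schwarz inequality~\eqref{I.12.1} gives $c \leq {\|A\|}_{2}{\|B\|}_{2}$, producing $({\|A\|}_{2} + {\|B\|}_{2})^{2}$ after taking square roots.

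The main obstacle is really the algebraic bookkeeping that isolates the identity above: one has to notice that the coefficient $(1-2\nu)^{2}$ attached to ${\|A-B\|}_{2}^{2}$ is calibrated precisely so as to cancel the $r$-dependent part of $w_{_{(2,\nu)}}^{2}(X)$ and leave ${\|A+B\|}_{2}^{2}$ plus a manifestly signed remainder $8\nu(1-\nu)(c-r)$. Once that identity is in hand, Lemma~\ref{L.1.4} is not needed and both estimates reduce to sign considerations together with a single application of~\eqref{I.12.1}.
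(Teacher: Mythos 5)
Your proof is correct, and it takes a genuinely different route from the paper. The paper proves the identity ${\|A+B\|}_{2}^{2} = 2{\|\mathfrak{R}_{\nu}(X)\|}_{2}^{2} - (1-2\nu)^{2}{\|A-B\|}_{2}^{2}$ (with $X=\begin{bmatrix} 0 & A\\ B^{*} & 0\end{bmatrix}$) by applying the parallelogram law (Lemma~\ref{L.1.4}) to $\frac{A+B}{2}$ and $\frac{(1-2\nu)(A-B)}{2}$, gets the first inequality from ${\|\mathfrak{R}_{\nu}(X)\|}_{2}\leq w_{_{(2,\nu)}}(X)$, and gets the second by estimating the supremum defining $w_{_{(2,\nu)}}(X)$ termwise with the triangle inequality. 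You instead evaluate $w_{_{(2,\nu)}}^{2}(X)$ in closed form via Theorem~\ref{T.1.3} and Lemma~\ref{L.6.3}, which reduces the whole statement to an identity in ${\|A\|}_{2}$, ${\|B\|}_{2}$ and ${\rm tr}(B^{*}A)$; I checked the algebra (using $2(2\nu^{2}-2\nu+1)=1+(1-2\nu)^{2}$ and $8\nu(1-\nu)=2-2(1-2\nu)^{2}$) and your identity
\begin{align*}
2w_{_{(2,\nu)}}^{2}(X)-(1-2\nu)^{2}{\|A-B\|}_{2}^{2}={\|A+B\|}_{2}^{2}+8\nu(1-\nu)\bigl(c-r\bigr)
={\|A\|}_{2}^{2}+{\|B\|}_{2}^{2}+2c-2(1-2\nu)^{2}(c-r)
\end{align*}
is correct, after which $c\geq r$ and \eqref{I.12.1} finish both bounds. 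What each approach buys: yours yields an exact formula with an explicit nonnegative defect $8\nu(1-\nu)\bigl(|{\rm tr}(B^{*}A)|-\mathfrak{R}\,{\rm tr}(B^{*}A)\bigr)$ that quantifies the gap in the first inequality (and shows equality holds iff $\nu\in\{0,1\}$ or ${\rm tr}(B^{*}A)\geq 0$), whereas the paper's argument is more structural, avoids the closed-form trace formula, and would transfer to settings where such a formula is unavailable.
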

\begin{proof}
Applying Lemma \ref{L.1.4} with $X := \frac{A+B}{2}$ and
$Y := \frac{(1-2\nu)(A-B)}{2}$, we get
\begingroup\makeatletter\def\f@size{10}\check@mathfonts
\begin{align*}
2\left({\left\|\frac{A+B}{2}\right\|}_{2}^2 + {\left\|\frac{(1-2\nu)(A-B)}{2}\right\|}_{2}^2\right)
=  {\left\|\nu A+(1-\nu)B\right\|}_{2}^2 + {\left\|(1-\nu)A+\nu B\right\|}_{2}^2.
\end{align*}
\endgroup
This implies
\begingroup\makeatletter\def\f@size{10}\check@mathfonts
\begin{align*}
{\left\|A+B\right\|}_{2}^2 = 2{\left\|\nu A+(1-\nu)B\right\|}_{2}^2 + 2{\left\|(1-\nu)A+\nu B\right\|}_{2}^2
- (1-2\nu)^2{\left\|A-B\right\|}_{2}^2,
\end{align*}
\endgroup
and so
\begingroup\makeatletter\def\f@size{10}\check@mathfonts
\begin{align*}
{\left\|A+B\right\|}_{2}^2 = 2{\left\|\nu A+(1-\nu)B\right\|}_{2}^2 + 2{\left\|(1-\nu)A^*+\nu B^*\right\|}_{2}^2
- (1-2\nu)^2{\left\|A-B\right\|}_{2}^2.
\end{align*}
\endgroup
Therefore, by Lemma \ref{L.6.3}, we obtain
\begingroup\makeatletter\def\f@size{10}\check@mathfonts
\begin{align*}
{\left\|A+B\right\|}_{2}^2 = 2{\left\|\begin{bmatrix}
0 & \nu A+(1-\nu)B \\
(1-\nu)A^*+\nu B^* & 0
\end{bmatrix}\right\|}_{2}^2
- (1-2\nu)^2{\left\|A-B\right\|}_{2}^2,
\end{align*}
\endgroup
or equivalently,
\begingroup\makeatletter\def\f@size{10}\check@mathfonts
\begin{align*}
{\left\|A+B\right\|}_{2}^2 = 2{\left\|\nu\begin{bmatrix}
0 & A \\
B^* & 0
\end{bmatrix} + (1-\nu){\begin{bmatrix}
0 & A \\
B^* & 0
\end{bmatrix}}^*\right\|}_{2}^2
- (1-2\nu)^2{\left\|A-B\right\|}_{2}^2.
\end{align*}
\endgroup
Hence
\begin{align}\label{I.1.T.2.4}
{\left\|A+B\right\|}_{2}^2 = 2{\left\|\mathfrak{R}_{\nu}\left(\begin{bmatrix}
0 & A\\
B^* & 0
\end{bmatrix}\right)\right\|}_{2}^2
- (1-2\nu)^2{\left\|A-B\right\|}_{2}^2.
\end{align}
Since ${\left\|\mathfrak{R}_{\nu}\left(\begin{bmatrix}
0 & A\\
B^* & 0
\end{bmatrix}\right)\right\|}_{2} \leq w_{_{(2,v)}}\left(\begin{bmatrix}
0 & A \\
B^* & 0
\end{bmatrix}\right)$, by \eqref{I.1.T.2.4} it follows that
\begin{align}\label{I.2.T.2.4}
{\left\|A+B\right\|}_{2}^2 \leq 2w_{_{(2,v)}}^2\left(\begin{bmatrix}
0 & A \\
B^* & 0
\end{bmatrix}\right)- (1-2\nu)^2{\left\|A-B\right\|}_{2}^2.
\end{align}
Also, by Lemma \ref{L.6.3} and the triangle inequality for the norm ${\|\cdot\|}_{2}$, we have
\begingroup\makeatletter\def\f@size{10}\check@mathfonts
\begin{align*}
2&w_{_{(2,v)}}^2\left(\begin{bmatrix}
0 & A \\
B^* & 0
\end{bmatrix}\right)- (1-2\nu)^2{\left\|A-B\right\|}_{2}^2
\\&= 2\displaystyle{\sup_{\theta \in \mathbb{R}}}
{\left\|\mathfrak{R}_{\nu}\left(e^{i\theta}\begin{bmatrix}
0 & A\\
B^* & 0
\end{bmatrix}\right)\right\|}_{2}^2
- (1-2\nu)^2{\left\|A-B\right\|}_{2}^2
\\& = 2\displaystyle{\sup_{\theta \in \mathbb{R}}}
{\left\|\begin{bmatrix}
0 & \nu e^{i\theta} A+(1-\nu)e^{-i\theta}B \\
(1-\nu)e^{-i\theta}A^*+\nu e^{i\theta}B^* & 0
\end{bmatrix}\right\|}_{2}^2
- (1-2\nu)^2{\left\|A-B\right\|}_{2}^2
\\& = 2\displaystyle{\sup_{\theta \in \mathbb{R}}}\left(
{\left\|\nu e^{i\theta} A+(1-\nu)e^{-i\theta}B\right\|}_{2}^2
+ {\left\|(1-\nu)e^{-i\theta}A^*+\nu e^{i\theta}B^*\right\|}_{2}^2\right)
- (1-2\nu)^2{\left\|A-B\right\|}_{2}^2
\\& \leq 2\Big(\nu{\left\|A\right\|}_{2} + (1-\nu){\left\|B\right\|}_{2}\Big)^2
+ 2\Big((1-\nu){\left\|A\right\|}_{2} + \nu{\left\|B\right\|}_{2}\Big)^2
- (1-2\nu)^2\Big|{\left\|A\right\|}_{2}-{\left\|B\right\|}_{2}\Big|^2
\\& = {\|A\|}_{2}^2 + 2 {\|A\|}_{2}{\|B\|}_{2} +{\|B\|}_{2}^2,
\end{align*}
\endgroup
and hence
\begin{align}\label{I.3.T.2.4}
2&w_{_{(2,v)}}^2\left(\begin{bmatrix}
0 & A \\
B^* & 0
\end{bmatrix}\right)- (1-2\nu)^2{\left\|A-B\right\|}_{2}^2
\leq \left({\|A\|}_{2} + {\|B\|}_{2}\right)^2.
\end{align}
Now, by \eqref{I.2.T.2.4} and \eqref{I.3.T.2.4}, we deduce the desired result.
\end{proof}
\begin{remark}\label{R.3.4}
Theorem \ref{T.1.3} is a generalization of a result due to Aldalabih and Kittaneh \cite{A.K.2}.
In fact if $\nu = 1/2$, then
\begin{align*}
{\|A+B\|}_{2} \leq \sqrt{2}w_{_{2}}\left(\begin{bmatrix}
0 & A \\
B^* & 0
\end{bmatrix}\right) \leq {\|A\|}_{2} + {\|B\|}_{2},
\end{align*}
which has been proven in \cite[Theorem~7]{A.K.2}.
For the usual operator norm and the Schatten $p$-norm,
related results have been given in \cite[Theorem~2.3]{K.M.Y} and \cite[Theorem~4]{A.A}, respectively.
\end{remark}
\bibliographystyle{amsplain}

\end{document}